\newtheorem*{theorem*}{Theorem}
\newtheorem{theorem}{Theorem}
\newtheorem{lemma}[theorem]{Lemma}
\newtheorem{proposition}[theorem]{Proposition}
\newtheorem{remark}[theorem]{Remark}
\DeclareMathOperator{\rk}{rk}
\def\Qed{\hfill\raisebox{.6ex}{\framebox[2.5mm]{}}\\[.15in]}
\def\m{\mathbb}
\newcommand{\xdashrightarrow}[2][]{\ext@arrow 0359\rightarrowfill@@{#1}{#2}}
\newcommand{\xdashleftarrow}[2][]{\ext@arrow 3095\leftarrowfill@@{#1}{#2}}
\newcommand{\xdashleftrightarrow}[2][]{\ext@arrow 3359\leftrightarrowfill@@{#1}{#2}}
\def\rightarrowfill@@{\arrowfill@@\relax\relbar\rightarrow}
\def\leftarrowfill@@{\arrowfill@@\leftarrow\relbar\relax}
\def\leftrightarrowfill@@{\arrowfill@@\leftarrow\relbar\rightarrow}
\def\arrowfill@@#1#2#3#4{%
  $\m@th\thickmuskip0mu\medmuskip\thickmuskip\thinmuskip\thickmuskip
   \relax#4#1
   \xleaders\hbox{$#4#2$}\hfill
   #3$%
}
\renewcommand\arraystretch{0.5}
\renewenvironment{proof}{\noindent{\bf Proof.}\newline}{\Qed}
\begin{document}
\date{}
\title{Explicit Schoen surfaces}
\author{Carlos Rito \and Xavier Roulleau \and Alessandra Sarti}
\maketitle

\begin{abstract}
We give an explicit construction for the $4$-dimensional family of Schoen surfaces by computing equations
for their canonical images, which are $40$-nodal complete intersections of a quadric and the Igusa quartic in $\mathbb P^4$.
We then study a particularly interesting example, with $240$ automorphisms and maximal Picard number.

\noindent 2010 MSC: 14J29, 14J28

\end{abstract}

{\bf Keywords:}  Irregular surfaces, Lagrangian surfaces, Igusa quartic, Segre cubic, K3 surfaces

\section{Introduction}

While working on a problem related to the Hodge conjecture, Chad Schoen \cite{Sc}
used deformation theory to construct a family of surfaces $S$
(from now on called {\it Schoen surfaces}) with some interesting
features. The one that has interested Schoen most is that the Albanese map
of $S$ embeds it into its Albanese variety $A$, but for a generic Schoen surface the cycle $S$
in $H^{4}(A,\mathbb Q)$ is not contained in the subspace generated
by the intersections of two divisors of $A;$
the existence of such cycles on an abelian variety $A$ makes the Hodge conjecture more difficult to prove.

Another interesting property of $S$ is that the natural map 
\[
\wedge^{2}H^{0}(S,\Omega_{S})\to H^{0}(S,K_{S})
\]
has a one dimensional kernel, and therefore $S$ is a Lagrangian surface
in $A$. Moreover the kernel is not of the form $\omega_{1}\wedge\omega_{2}$,
($\omega_{i}\in H^{0}(S,\Omega_{S})$), therefore by the Castelnuovo-De
Franchis Theorem, the surface do not admit fibrations onto a curve
of genus $\geq2$. Only a few number of Lagrangian surfaces without
a fibration onto a curve of higher genus are known (see \cite{BNP, BPS, BT}).
Such examples are interesting for people studying k\"ahelerian groups,
e.g. one can ask whether their fundamental group is nilpotent (cf.
\cite{Ca}).

By \cite{Be1}, when the canonical map of a surface of general
type has degree $>1$ onto a surface, that surface either has $p_{g}=0$
or is itself canonically embedded, the latter case being rather exceptional
(see \cite{CPT} for a list of the examples known so far). In \cite{CMR},
Ciliberto, Mendes Lopes and the second author studied Schoen surfaces geometrically,
proving that the canonical map of a Schoen surface $S$ is $2$-to-$1$ onto
a $40$-nodal degree $8$ complete intersection surface
$X_{40}\subset\mathbb P^{4}$ and the ramification of the double cover $S\to X_{40}$ is  the
set of $40$ nodes.
They also show that Schoen surfaces are not universally covered by the bidisk
(very few surfaces with $K^2=8\chi$ and such property are known).

Miyaoka's bound tells us that on a degree $8$ complete intersection
surface in $\mathbb P^{4}$, there cannot be more than $40$ nodes.
The construction of Schoen surfaces gives the first theoretical
proof  that such $40$-nodal surface exists, but without providing any
equations for it. 
In \cite{Be3}, Beauville used Schoen surfaces in order to
show the existence of $48$-nodal degree $16$ complete intersection
surfaces $X_{48}$ in $\mathbb P^{6}$ and surfaces $\tilde S$ whose canonical map
is $2$-to-$1$ onto $X_{48}$. 


The main result of this paper is an explicit construction of the surfaces
$X_{40}$ by equations, and therefore an explicit construction of $S$
by double cover. The idea of the construction of the surfaces $X_{40}$ is the following.
The Igusa quartic threefold $I_{4}\subset\mathbb P^{4}$ is singular along $15$
lines of $A_{1}$ singularities. Its intersection with a generic quadric
gives thus a degree $8$ complete intersection surface containing
$30$ nodes. The main question is therefore to find a quadric $Q_{2}$ which,
while still transversal to the $15$ singular lines, is tangent to the Igusa quartic at $10$ more points,
leading to a $40$-nodal surface $X_{40}:=I_{4}\cap Q_{2}$. Our construction
is very concrete, since we have the explicit equation
for the quadric $Q_{2}$, whose coefficients are depending of $4$
parameters.

In order to obtain that result, we use the knowledge of the above
mentioned papers, computer algebra and the rich geometry of the Igusa
quartic threefold, and of its dual, the Segre cubic threefold $S_{3}\subset\mathbb P^{4}$,
which is the unique cubic threefold with $10$ nodes.
It is well known that the Igusa quartic threefold parametrizes quartic Kummer
surfaces \cite[Theorem 3.3.8]{Hu}: taking the intersection of
$I_{4}$ by a hyperplane $T_{x}$ tangent to a generic point $x\in I_{4}$,
one obtains a $16$-nodal Kummer surface, $15$ nodes coming from
the intersection of the $15$ lines in $I_{4}$ with $T_{x}$, and
one more node at $x$. Our construction of $X$ uses the $15$-nodal
$K3$ surfaces obtained as the intersections of $I_{4}$ with a generic
hyperplane, giving new interesting geometric features to the Igusa
quartic $I_{4}$.

We then study a Schoen surface with a large group of symmetries (of
order $240$). We compute the isogeny class of its Albanese variety and we prove that it has maximal Picard number. Although interesting, examples of surfaces with maximal Picard number are rather scarce, see e.g. \cite{Be2}.

As a by product of our work, we also obtain a geometric construction of
a $3$-dimensional subfamily of Schoen surfaces, as a bidouble cover
of some particular Kummer surfaces. Construction which interestingly
matches some (theoretical) constructions of Lagrangian surfaces suggested
by Bogomolov and Tschinkel in \cite{BT} (see Remark \ref{RemSubfamily}).

Since $15$-nodal quartic surfaces, obtained as generic hyperplane sections of the Igusa quartic,
play a key role in our construction, one may ask if an analogous construction could be done
using a different family of $15$-nodal quartics. The answer is negative: we show in Appendix \ref{Sarti} that a generic
quartic surface with $15$ nodes can be realized as a hyperplane section of the Igusa quartic threefold.

The paper is organized as follows.
In Section 2, we recall some known facts on Schoen surfaces. 
 In Section 3, we construct the $40$-nodal degree
$8$ surfaces in $\mathbb P^{4}$, we prove that their set of $40$ nodes
is $2$-divisible, their associated double covers are not universally
covered by the bidisk and that they are Schoen surfaces.
In Section 4, we study an example of a Schoen surface with a large
automorphism group. The Appendix is on the moduli of K3 surfaces
with $15$ nodes.

\bigskip
\noindent{\bf Notation}

We work over the complex numbers. All varieties are assumed to be projective algebraic.
For a smooth surface $S,$ as usual $K_S$ is the canonical class, $p_g(S):=h^0(S,K_S)$ is the geometric genus,
$q(S):=h^1(S,K_S)$ is the irregularity and $\chi(\mathcal O_S)=1-q+p_g$ is the holomorphic Euler characteristic.
A $(-n)$-curve on a surface is a curve isomorphic to $\m P^1$ with self-intersection $-n.$
Linear equivalence of divisors is denoted by $\equiv.$

\bigskip
\noindent{\bf Acknowledgements}

The first author thanks the university of Poitiers for the hospitality during his visit in March 2016.
This research was partially supported by FCT (Portugal) under the project PTDC/MAT-GEO/2823/2014,
the fellowship SFRH/ BPD/111131/2015 and by CMUP (UID/MAT/00144/2013),
which is funded by FCT with national (MEC) and European structural funds through the programs FEDER, under the partnership agreement PT2020.

The authors would like to thank Amir Dzambic, Alice Garbagnati, Margarida Mendes Lopes, Rita Pardini, Pierre Py and Chad Schoen for useful conversations or correspondence.

\section{Schoen surfaces}

Let $C$ be a smooth genus $2$ curve with
jacobian $J(C)$ and consider the union $$V:=C\times C\, \cup_{C}\, J(C)$$
glued along the diagonal of $C\times C$ and $C\hookrightarrow J(C).$
Notice that $V$ is singular along $C.$
\begin{theorem}[\cite{Sc}]\label{thmSc}
The reducible surface $V$ can be deformed into a smooth surface of general type $S$ with invariants
\[
c_{1}^{2}=16=2c_{2},\ q=4,\ p_{g}=5.
\]
The moduli of these surfaces is $4$-dimensional.
The deformation space is locally smooth, thus it is locally irreducible.
\end{theorem}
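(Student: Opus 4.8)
The plan is to recognise $V$ as a $d$-semistable variety with normal crossings and then to smooth it by the deformation theory of such varieties. First I would record the local geometry: $V$ has two smooth surface components $V_1=C\times C$ and $V_2=J(C)$ meeting transversally along a double curve $D\cong C$, embedded as the diagonal $\Delta\subset C\times C$ and through Abel--Jacobi in $J(C)$. The decisive input is the pair of normal bundles. Adjunction along the diagonal gives $N_{\Delta/C\times C}\cong T_C$ of degree $2-2g=-2$, while adjunction along $C\subset J(C)$, using $K_{J(C)}\equiv 0$, gives $N_{C/J(C)}\cong\omega_C$ of degree $2g-2=2$. Hence
\[
N_{\Delta/C\times C}\otimes N_{C/J(C)}\cong T_C\otimes\omega_C\cong\mathcal O_C,
\]
so the sheaf $T^1_V\cong N_{\Delta/C\times C}\otimes N_{C/J(C)}$ controlling infinitesimal smoothings of the double curve is trivial and $V$ is $d$-semistable. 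This coincidence holds for every genus, but genus $2$ is exactly what makes $J(C)$ a surface, so that $V$ is a normal crossings surface at all.

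Next I would construct the smoothing. Because $T^1_V\cong\mathcal O_D$ admits a nowhere-vanishing section, there is a first-order deformation that smooths the double curve, and the task is to upgrade it to an honest flat one-parameter family $\mathcal X\to\Delta$ with central fibre $V$ and smooth general fibre $S$. This is the content of the smoothing theory for normal crossings varieties in the style of Friedman and of Kawamata--Namikawa: one runs the local-to-global spectral sequence $H^p(\mathcal T^q_V)\Rightarrow\mathbb T^{p+q}_V$ for the cotangent complex and lifts the chosen first-order class order by order. I expect this to be the main obstacle, because the obstruction space $\mathbb T^2_V$ does not vanish a priori, so one must prove that the obstruction to the smoothing direction vanishes, for instance through a $T^1$-lifting argument or by producing enough genuine deformations. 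Granting the lift, $S$ is smooth, and it is of general type since $p_g=5>0$ and $K_S^2=16>0$ force Kodaira dimension $2$.

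I would then compute the invariants on $V$ and carry them to $S$ by flatness and the invariance of the Hodge numbers in the smoothing. The normalisation sequence
\[
0\to\mathcal O_V\to\mathcal O_{C\times C}\oplus\mathcal O_{J(C)}\to\mathcal O_C\to 0
\]
gives $\chi(\mathcal O_S)=\chi(\mathcal O_V)=\chi(\mathcal O_{C\times C})+\chi(\mathcal O_{J(C)})-\chi(\mathcal O_C)=1+0-(-1)=2$. Additivity of the topological Euler number for a semistable family gives $c_2(S)=e(C\times C)+e(J(C))-2\,e(C)=4+0-2(-2)=8$, whence Noether's formula yields $c_1^2(S)=12\chi-c_2=24-8=16$. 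For the irregularity I would feed the same sequence into cohomology: the restriction $H^1(\mathcal O_{J(C)})\to H^1(\mathcal O_C)$ is an isomorphism (this is how $J(C)$ is built), while under $H^1(\mathcal O_{C\times C})\cong H^1(\mathcal O_C)^{\oplus 2}$ restriction to the diagonal is addition, so the gluing map $H^1(\mathcal O_{C\times C})\oplus H^1(\mathcal O_{J(C)})\to H^1(\mathcal O_C)$ is surjective with a $4$-dimensional kernel; thus $q(S)=h^1(\mathcal O_V)=4$ and $p_g(S)=\chi-1+q=5$.

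Finally, for the dimension and smoothness of the moduli I would assemble the first-order deformations of $V$ through the exact sequence $0\to H^1(\Theta_V)\to\mathbb T^1_V\to H^0(T^1_V)\to H^2(\Theta_V)$. The locally trivial part $H^1(\Theta_V)$ is governed by the moduli of $C$ and contributes $\dim\mathcal M_2=3$, while $H^0(T^1_V)=H^0(\mathcal O_C)$ contributes the single smoothing direction, for a total of $4$, provided the connecting map $H^0(T^1_V)\to H^2(\Theta_V)$ vanishes. Local smoothness is again the delicate point: Riemann--Roch gives $\chi(\Theta_S)=\tfrac{1}{6}(7c_1^2-5c_2)=12$, so with $h^0(\Theta_S)=0$ one finds $h^2(\Theta_S)=h^1(\Theta_S)+12=16\neq 0$ and the obstruction map is not automatically zero. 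The required vanishing is precisely the lifting statement flagged above; once it holds the deformation space is smooth, hence locally irreducible, of dimension $4$.
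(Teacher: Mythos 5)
The paper does not prove this statement: Theorem~\ref{thmSc} is quoted from Schoen's paper \cite{Sc} and used as a black box, so the only meaningful comparison is with Schoen's original argument. Your outline does follow its broad strategy: recognize $V$ as a $d$-semistable normal crossings surface (the computation $N_{\Delta/C\times C}\otimes N_{C/J(C)}\cong T_C\otimes\omega_C\cong\mathcal O_C$ is correct and is the right starting point), smooth it, and read off the invariants of the general fibre. Your numerics are also right: $\chi(\mathcal O_V)=2$ from the normalisation sequence, $c_2(S)=e(C\times C)+e(J(C))-2e(C)=8$ for the nearby fibre of a semistable family, $c_1^2=12\chi-c_2=16$, and the kernel of $H^1(\mathcal O_{C\times C})\oplus H^1(\mathcal O_{J(C)})\to H^1(\mathcal O_C)$ is indeed $4$-dimensional.

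There are, however, two genuine gaps, and they sit exactly where the theorem is hard. First, $d$-semistability is only a \emph{necessary} condition for the existence of a semistable smoothing; the existence of the smoothing and the smoothness of the deformation space are the actual content to be proved. You correctly observe that $\chi(\Theta_S)=12$ forces $h^2(\Theta_S)=16\neq 0$, so no soft vanishing is available---but you then ``grant the lift''. Neither Friedman's smoothing theorem (which is for $K3$-type central fibres) nor Kawamata--Namikawa (which requires $K$ trivial) applies off the shelf to $V$, so a concrete unobstructedness mechanism for the logarithmic deformations---the heart of Schoen's paper---must be supplied; without it the existence of $S$, the local smoothness, and the dimension count are all unproven. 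Second, semicontinuity in the flat family only gives $q(S)\le 4$ and $p_g(S)\le 5$, and constancy of $\chi$ only gives $p_g-q=1$, which does not pin down $(q,p_g)=(4,5)$. You need a lower bound, e.g.\ $E_1$-degeneration of the logarithmic Hodge--de Rham spectral sequence for the semistable family, or an Albanese map of $S$ to a deformation of $J(C)\times J(C)$ forcing $q\ge 4$; as written, ``invariance of the Hodge numbers in the smoothing'' is an assertion, not an argument.
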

\noindent As said in the Introduction, the canonical map of a Schoen surface $S$ is of degree 2 onto a $40$-nodal complete intersection
$X$ of a quadric and a quartic in $\mathbb P^4.$
From \cite[Lemmas 4.5, 4.6 and proof of Theorem 4.1]{CMR} we deduce the following.
\begin{proposition}[\cite{CMR}]\label{propCMR}
The above surfaces $X$ degenerate to the union of a double quadric surface and a quartic Kummer surface,
glued along a trope of the Kummer surface.
These surfaces are given by the intersection of two hyperplanes and a quartic hypersurface in $\mathbb P^4.$
Moreover, this degeneration induces the degeneration in Schoen's construction.
\end{proposition}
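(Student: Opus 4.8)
The statement is quoted from \cite{CMR}, so the plan is to reconstruct the degeneration directly from the complete-intersection description of $X$ and then identify the resulting limit with Schoen's reducible surface $V$. First I would write $X = Q_{2}\cap F_{4}$, the intersection of a quadric with the Igusa quartic, and degenerate the \emph{quadric} within the family to a quadric of rank $\leq 2$, i.e.\ to a pair of hyperplanes $H_{1}\cup H_{2}$. Since $X$ is a complete intersection, its flat limit is then $(F_{4}\cap H_{1})\cup(F_{4}\cap H_{2})$, a union of two quartic surfaces, one in each $\mathbb P^{3}=H_{i}$, glued along the plane quartic curve $F_{4}\cap H_{1}\cap H_{2}\subset\mathbb P^{2}$. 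This already realizes the limit as the intersection of two hyperplanes (a degenerate quadric) with a quartic hypersurface, as asserted.

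Next I would identify the two components. Using \cite[Lemmas 4.5, 4.6]{CMR} one arranges the one-parameter degeneration so that on $H_{1}$ the restriction of the quartic becomes a perfect square $q^{2}$ of a quadratic form, whence $F_{4}\cap H_{1}=2\{q=0\}$ is a \emph{double quadric surface}, while on $H_{2}$ it remains a quartic \emph{Kummer} surface $K$. The gluing locus $F_{4}\cap H_{1}\cap H_{2}$ is then the double conic $2\{q=0\}\cap H_{2}$, i.e.\ a plane section of $K$ that is a conic counted twice, which is precisely a \emph{trope} of $K$. This yields the first two sentences of the Proposition.

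For the last sentence the plan is to lift the degeneration through the degree-$2$ canonical cover $S\to X$ recalled above. The limiting cover $V\to X_{0}$ should restrict, over the Kummer component, to the $\pm1$-quotient $J(C)\to K$, and over the double quadric component, to the hyperelliptic product map $C\times C\to\{q=0\}\cong\mathbb P^{1}\times\mathbb P^{1}$; the two pieces are then reglued along the preimage of the trope, which is exactly the genus-$2$ curve $C$ along which $C\times C$ (via its diagonal) is glued to $J(C)$ (via $C\hookrightarrow J(C)$). Matching these identifications with \cite[proof of Theorem 4.1]{CMR} gives $\lim S = C\times C\,\cup_{C}\,J(C)=V$, so the degeneration of $X$ induces Schoen's.

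The hard part will be the last step: controlling the limit of the \emph{double cover} rather than of $X$ alone. Concretely one must track how the branch divisor (the $40$ nodes) splits as the quadric acquires rank $2$, verify that over the double quadric the two sheets degenerate to the $4{:}1$ hyperelliptic map $C\times C\to\mathbb P^{1}\times\mathbb P^{1}$ compatibly with the nonreduced (multiplicity-two) structure, and check that the sheets reglue along the trope to reproduce $C\times C\cup_{C}J(C)$ and not some other cover. Reconciling the covering degrees with the nonreduced component, and pinning down the gluing curve as the Abel--Jacobi image $C\subset J(C)$, is the delicate content underlying \cite[Lemmas 4.5, 4.6 and proof of Theorem 4.1]{CMR}, and is where I expect the real work to lie.
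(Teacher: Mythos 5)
The paper offers no proof of Proposition \ref{propCMR} at all: it is imported verbatim from \cite[Lemmas 4.5, 4.6 and proof of Theorem 4.1]{CMR}, so there is no in-paper argument to compare yours against line by line. That said, your sketch is consistent with what the cited source does and, more tellingly, it essentially reproduces the explicit degeneration the paper itself carries out later in Section \ref{degn}: there the quadric $Q_{a,b,c,d}$ is degenerated (after resolving an indeterminacy by blowing up, since $F_{a,b,c,d}$ vanishes identically at the trope locus) to a product of two linear forms $x\cdot J_{b,c,d}$, with $\{x=0\}$ cutting the Igusa quartic in a trope (a double quadric) and $\{J_{b,c,d}=0\}$ being a tangent hyperplane cutting it in a $16$-nodal Kummer quartic, glued along a trope of the latter --- exactly your rank-$2$ degeneration, with the extra information of \emph{which} rank-$2$ quadrics occur as limits of the family. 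Two caveats on your write-up. First, your phrase ``degenerate the quadric within the family to a quadric of rank $\leq 2$'' hides the fact that one must check such degenerate members actually lie in the closure of the $4$-parameter family (this is the content of the blow-up computation in Section \ref{degn}); an arbitrary pair of hyperplanes would give two quartic surfaces neither of which is a double quadric or a Kummer surface. Second, and more substantially, the last paragraph --- lifting the degeneration through the canonical double cover, reconciling the $4{:}1$ map $C\times C\to\mathbb P^1\times\mathbb P^1$ with the multiplicity-two component, and identifying the gluing curve with $\Delta_C\subset C\times C$ and the theta divisor $C\subset J(C)$ --- is precisely the content of the cited lemmas of \cite{CMR} and is left as a black box in your proposal. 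You identify it correctly as the hard part, but as written your argument is an outline whose decisive step is delegated to the reference, which is in fact no worse than what the paper itself does for this statement.
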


\section{The construction}\label{construction}
In this section we show the following:
\begin{theorem}
Let $I_4$ be the Igusa quartic in $\mathbb P^4.$
There exists a quadric on $4$ parameters $Q_{a,b,c,d}$ such that, for generic  values of the parameters,
the surface $$X_{40}:=I_4\cap Q_{a,b,c,d}$$ has exactly $40$ nodes.
These nodes are $2$-divisible in the Picard group and the double cover $S\rightarrow X_{40}$ ramified
over the nodes is a Schoen surface.
\end{theorem}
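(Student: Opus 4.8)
The plan is to construct the quadric $Q_{a,b,c,d}$ explicitly and then verify, via the known geometry of the Igusa quartic together with Proposition~\ref{propCMR}, that a generic member of the resulting pencil yields a genuine Schoen surface. The starting observation is that $I_4$ is singular along $15$ lines of $A_1$ singularities, so \emph{any} quadric section transversal to those lines already contributes $30$ nodes to $X_{40}$: each of the $15$ singular lines meets a generic quadric in $2$ points, and at each such point $X_{40}=I_4\cap Q$ inherits a node. Thus the whole problem reduces to producing a quadric that is moreover tangent to $I_4$ at exactly $10$ further smooth points, since a simple tangency of a smooth hypersurface with $I_4$ along the quartic produces precisely an ordinary double point of the surface section. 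The target number $40=30+10$ is forced: by the Miyaoka bound quoted in the Introduction a degree~$8$ complete intersection surface in $\mathbb P^4$ can carry at most $40$ nodes, so if we achieve $40$ we are automatically at the extremal configuration.

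First I would fix a convenient coordinate model for $I_4\subset\mathbb P^4$ in which its symmetry (the action of the symmetric group $\mathfrak S_6$) is manifest, and locate the $15$ singular lines and the $10$ distinguished points where the extra tangencies should occur. The natural candidates for these $10$ points come from the dual picture: the Segre cubic $S_3$, dual to $I_4$, has exactly $10$ nodes, and under projective duality the nodes of $S_3$ correspond to $10$ special tangent hyperplanes of $I_4$, equivalently to $10$ distinguished smooth points/tangency data on $I_4$. I would then write down the most general quadric compatible with transversality to the $15$ lines, impose the $10$ tangency conditions as polynomial equations in its coefficients, and solve. The claim that the solution space is a $4$-parameter family $Q_{a,b,c,d}$ should fall out of a dimension count: the space of quadrics in $\mathbb P^4$ is $14$-dimensional, and imposing $10$ tangencies (cutting down by $10$) leaves a $4$-dimensional family, exactly matching the moduli dimension of Theorem~\ref{thmSc}. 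This count, together with an explicit computer-algebra verification that for generic $(a,b,c,d)$ the section has \emph{no} singularities beyond the expected $40$ nodes, establishes the first sentence of the theorem.

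Next I would establish $2$-divisibility of the $40$ nodes in $\Pic(X_{40})$. The standard route is to exhibit the $40$ nodes as the zero-scheme cut out by a section of a line bundle that is itself $2$-divisible, or equivalently to produce an even set of nodes in the sense of Catanese: one shows there is a divisor class $L$ on the minimal resolution $\widetilde X_{40}$ with $2L\equiv\sum_i E_i$, where the $E_i$ are the $(-2)$-exceptional curves over the nodes. Rather than doing this by a direct lattice computation, I expect the cleanest argument to be indirect: once the double cover $S\to X_{40}$ branched over the $40$ nodes is shown to exist as a smooth surface (which is equivalent to the $2$-divisibility), one computes its numerical invariants. A double cover of a degree~$8$ surface with $K_X$ suitably identified, branched over $40$ nodes, should yield $c_1^2=16$, $c_2=8$, $q=4$, $p_g=5$ by the usual formulas for double covers resolving nodes, matching exactly the invariants in Theorem~\ref{thmSc}.

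The final and hardest step is to prove that the surfaces $S$ so obtained are genuinely Schoen surfaces, i.e.\ that they lie in the irreducible $4$-dimensional family of Theorem~\ref{thmSc}, not merely that they share its invariants. Here I would lean decisively on Proposition~\ref{propCMR}: the plan is to degenerate my $4$-parameter family $Q_{a,b,c,d}$ so that the quadric breaks into a pair of hyperplanes, forcing $X_{40}$ to degenerate into the union of a double quadric and a quartic Kummer surface glued along a trope, which is precisely the degeneration of Proposition~\ref{propCMR}. Since that proposition asserts the degeneration induces the degeneration in Schoen's construction, and since Schoen's deformation space is locally smooth and hence locally irreducible (Theorem~\ref{thmSc}), matching the special fibre forces the general members of my family to coincide with generic Schoen surfaces. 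The main obstacle I anticipate is technical rather than conceptual: carrying out this degeneration compatibly, i.e.\ checking that as $Q_{a,b,c,d}$ specializes the $40$ nodes and the double-cover structure limit correctly onto the configuration described in Proposition~\ref{propCMR}, while keeping track of which parameter directions are tangent to the Schoen moduli. A dimension match ($4=4$) plus the local irreducibility is what ultimately closes the argument, so the delicate point is verifying that the family $Q_{a,b,c,d}$ is not degenerate or contained in a smaller stratum.
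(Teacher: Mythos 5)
Your overall architecture (transversal quadric gives $30$ nodes on the $15$ singular lines, $10$ extra tangencies give $40$; then $2$-divisibility, invariants, and a degeneration to the double-quadric-plus-Kummer configuration of Proposition~\ref{propCMR} combined with local irreducibility of Schoen's deformation space) matches the paper, and your final step is essentially identical to Section~\ref{degn}. But there are two genuine gaps. The more serious one is the $2$-divisibility argument, which as written is circular: you propose to deduce $2$-divisibility from the existence of the smooth double cover $S\to X_{40}$ branched over the nodes, but that double cover exists \emph{because} the $40$ exceptional $(-2)$-curves form an even set --- the existence of the cover is equivalent to the divisibility, so it cannot be used to prove it. The paper proves this directly: it exhibits a special $\Sigma_5$-invariant member $\overline X_{40}$ carrying a $(40,12)$ configuration of tropes and nodes, finds four tropes $T_i=2C_i$ such that $\widetilde T_1+\widetilde T_2$ (respectively $\widetilde T_3+\widetilde T_4$) is linearly equivalent to twice a hyperplane class and exhibits $20$ of the exceptional curves with even multiplicity, and then transports $2$-divisibility to the generic member by a deformation argument over a polydisk (Proposition~\ref{2div1}, via \cite[Lemma 2.2]{CMR}). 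Some such explicit even-set computation, or an equivalent lattice/trope argument, is unavoidable.

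The second gap is in the construction of $Q_{a,b,c,d}$ itself. Your plan --- write the general quadric, impose $10$ tangency conditions, and observe $14-10=4$ --- is only a dimension heuristic: it does not show the conditions are independent, that the solution locus is nonempty, or that the tangencies can be arranged while keeping transversality to the $15$ lines. Moreover your identification of the $10$ tangency points is off: the $10$ nodes of the Segre cubic $S_3$ are dual to the $10$ \emph{tropes} of $I_4$ (hyperplanes tangent along entire quadric surfaces), not to $10$ isolated tangency points of the sought quadric. In the paper the $10$ tangencies arise differently and vary with the parameters: one takes a generic hyperplane section $Q_{15}=I_4\cap H_{a,b,c,d}$ ($15$-nodal quartic K3), projects from $5$ suitably chosen nodes to realize its image as $S_3\cap S_2$ with $S_2$ tangent to $S_3$ at the $10$ remaining nodes, and then dualizes $S_2$ to obtain $Q_{a,b,c,d}$; the $4$ parameters are exactly the choice of $H$. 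This mechanism, together with the combinatorial condition that no three of the five chosen nodes lie in a trope (needed for normality of $X_{40}$), is what actually produces the family, and is absent from your proposal.
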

We explain how to compute the quadric $Q_{a,b,c,d}.$
The corresponding computer code, implemented with Magma, is available at \cite{Ri}.

\subsection{Segre cubic, Igusa quartic}\label{SI}

The linear system $L$ of quadrics through points $p_1,\ldots,p_5\in\mathbb P^3$ in general position (i.e. no $4$ of them are contained in a hyperplane)
is $4$-dimensional. Let \mbox{$\phi:\mathbb P^3\dashrightarrow\mathbb P^4$} be the rational map corresponding to the linear system $L$ on $\mathbb{P}^3$.
The image $S_3:=\phi(\mathbb P^3)$ is the {\em Segre cubic}, the unique cubic threefold in $\mathbb P^4$ (up to projective equivalence)
with singular set the union of $10$ nodes (the images of the lines $p_ip_j$).
The Segre cubic contains $15$ planes:
the "images" (after blowing up $\mathbb P^3$) of $p_1,\ldots,p_5$ and of the
$10$ planes in $\mathbb P^3$ through exactly $3$ of the points $p_i.$
The dual variety $I_4$ (the image under the gradient map) of $S_3$ is the {\em Igusa quartic}.
The dual map contracts the above $15$ planes to singular lines of $I_4$, its singular set.
The Igusa quartic has $10$ {\em tropes}, i.e. $10$ hyperplane sections which are double quadrics.
For more details see e.g. \cite{Hu} or \cite{Do2}.

\subsection{The $40$-nodal surface}\label{40nodal}

Let $H\subset\mathbb P^4$ be a generic hyperplane. Then $Q_{15}:=I_4\cap H\subset\mathbb P^3$ is a quartic surface with $15$ nodes.
Since $H$ is generic, we can choose five nodes $p_1,\ldots,p_5$ in general position (i.e. no $4$ of them are in a hyperplane).
Consider the map $\phi:H\dashrightarrow\mathbb P^4$ given by the linear system $|L|$ of quadrics which pass through $p_1,\ldots,p_5.$
\begin{proposition}\label{completeint}
There exists a quadric $S_2\subset\mathbb P^4$ such that $$Q_{10}:=\phi(Q_{15})\cong S_3\cap S_2$$
and $Q_{10}$ has $10$ nodes which are disjoint from the nodes of $S_3.$
\end{proposition}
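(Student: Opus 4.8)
The plan is to realize $\phi$ as the small resolution of the Segre cubic and to recognize the image of $Q_{15}$ as the pullback of a quadric. First I would let $\pi\colon X\to H=\mathbb{P}^3$ be the blow-up of $p_1,\dots,p_5$, with exceptional divisors $E_1,\dots,E_5$ and $h=\pi^*\mathcal{O}(1)$. By the description in Section~\ref{SI}, the linear system $|L|$ becomes base-point free on $X$ and defines a morphism $\widetilde\phi\colon X\to S_3\subset\mathbb{P}^4$ with $\widetilde\phi^*\mathcal{O}_{\mathbb{P}^4}(1)=2h-\sum_i E_i$; this $\widetilde\phi$ is the classical small resolution of $S_3$, an isomorphism away from the proper transforms $\widetilde\ell_{ij}$ of the ten lines $\overline{p_ip_j}$, each of which is contracted to one of the ten nodes of $S_3$. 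Since $Q_{15}$ is a quartic with ordinary double points at $p_1,\dots,p_5$, its proper transform is $\widetilde Q=4h-2\sum_i E_i$, and the $5$ nodes at the $p_i$ are resolved while the $10$ remaining nodes of $Q_{15}$ persist on $\widetilde Q$.

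The key observation is that $[\widetilde Q]=4h-2\sum_iE_i=\widetilde\phi^*\mathcal{O}_{\mathbb{P}^4}(2)$. I would then show that pullback gives an isomorphism
\[
\widetilde\phi^*\colon H^0\!\big(\mathbb{P}^4,\mathcal{O}(2)\big)\xrightarrow{\ \sim\ }H^0\!\big(X,\,4h-2\textstyle\sum_iE_i\big).
\]
Injectivity is immediate because no quadric contains the cubic threefold $S_3$; surjectivity follows either from a dimension count (both spaces have dimension $15$ when the $p_i$ are in general position) or, more robustly, from the projection formula $\widetilde\phi_*\mathcal{O}_X=\mathcal{O}_{S_3}$ for the small resolution together with $H^0(S_3,\mathcal{O}_{S_3}(2))=H^0(\mathbb{P}^4,\mathcal{O}(2))$ (the latter from the ideal sequence, $\mathcal{I}_{S_3}(2)=\mathcal{O}(-1)$). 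Consequently there is a unique quadric $S_2\subset\mathbb{P}^4$ with $\widetilde\phi^*S_2=\widetilde Q$ as divisors, whence $Q_{10}:=\widetilde\phi(\widetilde Q)\subseteq S_2\cap S_3$.

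To pin down the singularities and upgrade this inclusion to an equality, I would use that $\widetilde\ell_{ij}$ satisfies $h\cdot\widetilde\ell_{ij}=1$ and meets, among the exceptional divisors, only $E_i$ and $E_j$, each once; hence $\widetilde Q\cdot\widetilde\ell_{ij}=4-2-2=0$. Since for generic $H$ none of the ten lines $\overline{p_ip_j}$ lies on $Q_{15}$, this numerical vanishing forces $\widetilde Q\cap\widetilde\ell_{ij}=\emptyset$. Thus $\widetilde Q$ lies entirely in the open locus where $\widetilde\phi$ is an isomorphism, so $\widetilde\phi|_{\widetilde Q}\colon\widetilde Q\to Q_{10}$ is an isomorphism onto its image and $Q_{10}$ is disjoint from the ten nodes of $S_3$; in particular $S_2$ avoids those nodes, so $S_2\cap S_3$ contains none of them and $S_2\cap S_3=\widetilde\phi\big(\widetilde\phi^{-1}(S_2\cap S_3)\big)=\widetilde\phi(\widetilde Q)=Q_{10}$. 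Finally $Q_{10}\cong\widetilde Q$ is smooth except at the images of the $10$ surviving nodes of $Q_{15}$, giving exactly $10$ nodes, disjoint from the nodes of $S_3$.

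The main obstacle is the second paragraph, namely producing the quadric $S_2$; once one knows that $\widetilde Q$ is the pullback of a degree-$2$ form, everything else is formal. The only genericity actually used is that $H$ be chosen so that $Q_{15}$ contains none of the ten lines $\overline{p_ip_j}$ (so that the numerical identity $\widetilde Q\cdot\widetilde\ell_{ij}=0$ yields genuine disjointness) and that the $p_i$ be in general position (so that the dimension count is exact); both are open conditions satisfied by a generic hyperplane.
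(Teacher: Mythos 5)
Your proof is correct, but it takes a genuinely different route from the paper's. The paper stays on the $K3$ surface: it passes to the resolution $\widetilde Q_{15}$, observes that the strict transform $|L'|$ of $|L|$ is base-point-free with $L'^2=6$, and invokes Saint-Donat's Theorem 6.1 \cite{Sd} to conclude that the image is a projectively normal complete intersection of a quadric and a cubic in $\mathbb P^4$, the cubic being $S_3$ since $Q_{10}\subset S_3$; the disjointness from the nodes of $S_3$ is then read off from the fact that only the lines $\overline{p_ip_j}$ are contracted. You instead work on the ambient threefold: you identify $\widetilde\phi\colon X\to S_3$ as the small resolution, recognize $[\widetilde Q]=\widetilde\phi^*\mathcal O(2)$, and produce $S_2$ by showing $H^0(\mathbb P^4,\mathcal O(2))\xrightarrow{\sim}H^0(X,4h-2\sum E_i)$ via $\widetilde\phi_*\mathcal O_X=\mathcal O_{S_3}$ and $H^0(\mathcal I_{S_3}(2))=0$. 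Your version is more self-contained (no appeal to the classification of projective models of $K3$ surfaces) and the computation $\widetilde Q\cdot\widetilde\ell_{ij}=0$ gives a cleaner, quantitative proof of the second assertion than the paper's one-line remark; what Saint-Donat buys the paper is brevity and the projective normality of $Q_{10}$ for free. The only point you should flag more carefully is the genericity you invoke, namely that no line $\overline{p_ip_j}$ lies on $Q_{15}$: this is an open condition, but its nonemptiness deserves a word (e.g.\ the lines of $I_4$ meeting two fixed singular lines form at most a finite set, so a generic hyperplane section avoids them); the paper relies on the same implicit hypothesis when it asserts that the ten remaining nodes are disjoint from the lines $p_ip_j$.
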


\begin{proof}
We have $\phi(H)\cong S_3$ and $\phi$ is of degree $1$ outside of the lines $p_ip_j,$ therefore
$\phi$ sends $Q_{15}$ birationally to a surface $Q_{10}$ contained in $S_3,$ a $10$-nodal $K3$ surface.
Now consider the resolution of singularities $\widetilde Q_{15}\rightarrow Q_{15}$
and let $|L'|$ be the strict transform of $|L|$ in $\widetilde Q_{15}$.
Since $L'^2=6$ and $|L'|$ has no base points, \cite[Theorem 6.1]{Sd} implies that $Q_{10}$ is a complete intersection of a
quadric $S_2$ and a cubic in $\mathbb P^4.$ This cubic can be assumed to be $S_3$ because $Q_{10}\subset S_3.$

The second assertion follows from the fact that the $10$ nodes of $Q_{10}$ correspond to the nodes of $Q_{15}$
disjoint from the $10$ lines $p_ip_j,$ $i,j\in\{1,\ldots,5\},$ which are contracted to the nodes of $S_3.$
\end{proof}

Using Magma, we compute  
this $4$-dimensional family of smooth quadrics $S_2$ (see \cite[Section A]{Ri}).
The dual of $S_2$ is a smooth quadric $Q_2.$ Consider the dual maps
$$d_1:S_3\dashrightarrow I_4,$$ $$d_2:S_2\dashrightarrow Q_2$$ and define $X_{40}:=I_4\cap Q_2.$
Since $S_2$ is tangent to $S_3$ at $10$ smooth points of $S_3$ and duality preserves tangencies, then
$X_{40}$ has at least $10$ singular points. The purpose of this construction is to find $Q_2$ meeting the
$15$ singular lines of $I_4$ transversally, so that $X_{40}$ is a $40$-nodal surface.
We show below that, up to the symmetry of $I_4,$ at most one choice of the nodes $p_1,\ldots,p_5$ serves our aims.
Notice that the quartic surface $Q_{15}$ has $10$ tropes (double conics), which are induced by the tropes of $I_4.$
\begin{proposition}
If exactly three of the nodes $p_1,\ldots,p_5$ are in a trope of $Q_{15},$ then the surface $X_{40}$ is non-normal.
\end{proposition}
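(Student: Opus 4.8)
The plan is to show that the hypothesis forces the smooth quadric $Q_2$ to contain one of the $15$ singular lines of $I_4$: since $X_{40}=I_4\cap Q_2$ is then singular along that whole line, it cannot be normal. Relabel so that $p_1,p_2,p_3$ lie on the trope. Because the trope is a (double) conic $E$ and $p_1,p_2,p_3$ are not collinear (general position), the plane spanned by $E$ is exactly $\pi_{123}:=\langle p_1,p_2,p_3\rangle$, and the hypothesis ``exactly three'' guarantees $p_4,p_5\notin\pi_{123}$.

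First I would analyse $\phi$ on the plane $\pi_{123}$. Since $p_4,p_5\notin\pi_{123}$, the restriction $\phi|_{\pi_{123}}$ is given by the net of conics through $p_1,p_2,p_3$, i.e. the standard quadratic Cremona transformation of $\pi_{123}\cong\mathbb P^2$; its image is one of the $15$ planes of $S_3$, namely the plane $P:=\phi(\pi_{123})$ attached to the triple $\{p_1,p_2,p_3\}$. A conic through all three base points is sent to a line, so $\phi(E)=\ell$ is a line in $P$. Because $P\subset S_3$, the intersection $Q_{10}\cap P=(S_3\cap S_2)\cap P=S_2\cap P$ is a conic in $P$, and set-theoretically it equals $\ell$ (its points come from $E\subset Q_{15}$). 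A conic whose reduction is a single line is a double line, so $S_2\cap P=2\ell$; in particular $\ell\subset S_2\cap S_3$ and $S_2$ is tangent to the plane $P$ all along $\ell$.

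Then I would dualise. Writing $q$ for the quadratic form of $S_2$, with matrix $M$, and $W\subset\mathbb C^5$ for the linear span of $P$, the equality $S_2\cap P=2\ell$ says that $q|_W$ has rank $1$ with radical the line $\ell$. For $x\in\ell$ this means $Mx\perp W$, i.e. the polar (tangent) hyperplane $T_xS_2$ contains $P$. Hence the Gauss map $d_2$ sends every point of $\ell$ into the dual line $L:=P^{*}$ of hyperplanes through $P$. Since $S_2$ is smooth, $d_2$ is an isomorphism onto $Q_2$, so $d_2(\ell)$ is a line contained in $L$, forcing $L=d_2(\ell)\subset Q_2$. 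But $L$ is precisely the singular line of $I_4$ obtained by contracting $P$ under $d_1$; thus $L$ is one of the $15$ singular lines of $I_4$ and $L\subset Q_2$. Along $L$ the Jacobian of the two equations cutting out $X_{40}=I_4\cap Q_2$ drops rank, the gradient of the quartic vanishing identically on the singular line, so $X_{40}$ is singular at every point of $L$ and therefore non-normal.

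The main obstacle is the middle step: pinning down the scheme structure of $S_2\cap P$ so as to conclude that it is the \emph{double} line $2\ell$ rather than a reduced pair of lines. This requires checking that $P\not\subset S_2$ (so that $S_2\cap P$ is a genuine conic) and that the reduction of $Q_{10}\cap P$ is the single irreducible line $\ell=\phi(E)$, for which I would use the irreducibility of the trope conic $E$ and the birationality of the Cremona map $\phi|_{\pi_{123}}$ away from its base points. Once the double-line structure is secured, the polarity computation and the non-normality conclusion are routine.
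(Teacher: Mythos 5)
Your proposal is correct and follows essentially the same route as the paper: identify the trope plane $T=\langle p_1,p_2,p_3\rangle$, show $\phi(T)$ is one of the $15$ planes of $S_3$ and that the trope conic maps to a line so that $S_2\cap\phi(T)$ is a double line, then dualise to conclude that $Q_2$ contains the singular line of $I_4$ obtained by contracting that plane. The only differences are cosmetic: you justify $\phi(C)=\ell$ via the planar Cremona map rather than the intersection-number computation on the blow-up, and you spell out the polarity argument (and the scheme-theoretic caveat about $2\ell$) that the paper compresses into a citation.
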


\begin{proof}
Let $T$ be the hyper plane of $H$ which gives the trope of $Q_{15}$ containing three of the nodes $p_1,\ldots,p_5.$
Since $\phi$ is injective outside the union of the lines $p_ip_j$, one has $\phi(T\cap Q_{15})=\phi(T)\cap S_2.$
Let $C$ be the conic such that $T\cap Q_{15}=2C.$
After blowing up $\mathbb P^3$ at $p_1,\ldots,p_5,$ the strict transform of a quadric in $|L|$ meets the strict transform
of $C$ at exactly one point. This implies that the image $\phi(C)$ is a line. 
We have then that $\phi(T)\cap S_2$ is a double line and, as stated in Section \ref{SI}, $\phi(T)$ is a plane in $S_3.$
When taking the duals, this plane is contracted to a singular line of $I_4$ (see e.g. \cite[$\S$ 3.3.4]{Hu}) and the quadric $Q_2$ contains this line.
This implies that the singular set of $X_{40}$ is of dimension $1.$
\end{proof}

Therefore, a surface $X_{40}$ is normal only if no $3$ of the nodes $p_1,\ldots,p_5$ are in a trope of $Q_{15}.$
We compute that there are exactly $6$ such sets of nodes (see \cite{Ri}). The $10$ tropes of $I_4$
induce $10$ tropes of $Q_{15},$ so we compute the sets of $5$ singular lines of $I_4$ such that there is no trope
containing $3$ of them. Fixing one of these sets (the $S_6$ symmetry of $I_4$ gives the remaining sets) we have
a choice of five nodes $p_1,\ldots,p_5$ for each surface $Q_{15}.$ The computations with Magma available in \cite{Ri} confirm that a generic surface $X_{40}$ constructed as above has $40$ nodes and no other singularities.
Our computations are optimal in the sense that we construct the entire family at once:
the output is a quadric on $4$ parameters $Q_{a,b,c,d}$ such that, for generic  values of the parameters,
the surface $I_4\cap Q_{a,b,c,d}$ has exactly $40$ nodes.

\begin{figure}[h]
$$
    \begin{array}{ccccccc}
     & & S_3 & \xdashrightarrow{\text{\ \ \ \ \ $d_1$ \ \ \ \ }} & I_4 & \\
    I_4\cap H_{a,b,c,d} & \xdashrightarrow{\text{\ \ \ \ \ $|L|$ \ \ \ \ }} & \supset &  & \supset & =:X_{40}  \\
     & & S_2 & \xdashrightarrow{\text{\ \ \ \ \ $d_2$ \ \ \ \ }} & Q_{a,b,c,d} & 
     \end{array}
$$
 \caption{Here the symbol $\supset$ means intersection.}
 \label{fig1}
\end{figure}


\subsection{$2$-divisibility of the nodes}\label{2div}

In the previous section we proved that to a generic point $t$ in $(\mathbb{P}^4)^*$ 
(the dual space of $\mathbb{P}^4$), one can associate a surface $X_t$, the minimal 
resolution of a $40$-nodal surface $X_{40}(t)$ in $\mathbb{P}^4$. 
Let $o$ be a point in $(\mathbb{P}^4)^*$ such that the surface $X_{o}$ is defined. 
There exists a small polydisk  $B$ centered at $o$, 
a smooth  complex manifold $\mathcal{X}$ and
 a proper flat morphism $\mathcal{X}\to B$ such that the fiber at $t\in B$  is $X_t$. 
 There are moreover $40$ divisors $D_1,\dots,D_{40}$ on $\mathcal X$ such that 
 the intersection of these divisors with $X_t$ are the $40$ $(-2)$-curves of $X_t$
  over nodes in $X_{40}(t)$. One has:

\begin{proposition}\label{2div1}
Suppose that the sum of the $40$ $(-2)$-curves on $X_{o}$ is $2$-divisible.
Then, up to shrinking $B$, the sum of the $40$ $(-2)$-curves on the surface $X_{t}$ ($t\in B$)
is $2$-divisible.
\end{proposition}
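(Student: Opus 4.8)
The plan is to transport the square root of the node-sum from the central fibre $X_o$ to the nearby fibres by parallel transport, and then to check that the transported class stays algebraic; the point is that this last check turns out to be automatic. First I would invoke Ehresmann's theorem: since $\pi\colon\mc X\to B$ is smooth and proper and $B$ is a (contractible) polydisk, $\pi$ is a $C^{\infty}$ fibre bundle, so the local system $R^2\pi_*\m Z$ is trivial and one obtains canonical isomorphisms $H^2(X_t,\m Z)\cong H^2(X_o,\m Z)$. Under these the class of the node-sum $[N_t]:=\sum_{i=1}^{40}[D_i\cap X_t]$ is the restriction to $X_t$ of the fixed class $\sum_i[D_i]\in H^2(\mc X,\m Z)$, hence it is the parallel (locally constant) section whose value at $o$ is $[N_o]$.

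Next I would transport the square root on cohomology. By hypothesis there is a line bundle $L_o$ on $X_o$ with $L_o^{\otimes 2}\cong\mc O_{X_o}(N_o)$; set $\ell_o:=c_1(L_o)\in H^2(X_o,\m Z)$, so $2\ell_o=[N_o]$. Parallel transport carries $\ell_o$ to a constant integral class $\ell_t\in H^2(X_t,\m Z)$, and because the transport isomorphism is a map of $\m Z$-modules sending the flat section value $[N_o]$ to $[N_t]$, one gets $2\ell_t=[N_t]$ in $H^2(X_t,\m Z)$ for every $t\in B$.

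The heart of the argument is to show that $\ell_t$ is of Hodge type $(1,1)$, and this is where the construction pays off: because $N_t=\sum_i(D_i\cap X_t)$ is a genuine effective divisor on the projective surface $X_t$, its class maps to $0$ under $H^2(X_t,\m Z)\to H^2(X_t,\mc O_{X_t})=H^{0,2}(X_t)$. Applying this $\m C$-linear map to $2\ell_t=[N_t]$ and using that $H^{0,2}(X_t)$ is a torsion-free $\m C$-vector space forces the image of $\ell_t$ to vanish, i.e. $\ell_t\in H^2(X_t,\m Z)\cap H^{1,1}(X_t)$. By the Lefschetz $(1,1)$-theorem $\ell_t=c_1(L_t)$ for some $L_t\in\Pic(X_t)$. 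Finally $c_1(L_t^{\otimes 2})=c_1(\mc O_{X_t}(N_t))$ shows that $L_t^{\otimes 2}\otimes\mc O_{X_t}(-N_t)$ lies in $\Pic^0(X_t)$; since $\Pic^0(X_t)$ is a divisible group I can correct $L_t$ by a square root of this element to obtain $M_t$ with $M_t^{\otimes 2}\cong\mc O_{X_t}(N_t)$, which proves the $2$-divisibility of the node-sum on $X_t$.

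The only place where I expect to need care is the Hodge-theoretic step: a priori an integral $(1,1)$-class on $X_o$ need not remain of type $(1,1)$ on $X_t$ (this is exactly the Noether--Lefschetz phenomenon), and one might fear having to impose a genericity or period-derivative condition. What removes the difficulty here is that $\ell_t$ is not an arbitrary transported Hodge class but literally one half of the class of an effective divisor that persists over all of $B$, so the $(1,1)$-ness of $[N_t]$ is tautological and passes to $\ell_t$ by $\m C$-linearity of the $(0,2)$-projection. Accordingly I would only use ``shrinking $B$'' to guarantee the trivialization and the existence of the global divisors $D_i$, and not for any transversality or genericity.
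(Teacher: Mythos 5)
Your argument is correct and is in substance exactly what the paper does: the paper sets $\mathcal{L}=\mathcal{O}_{\mathcal{X}}(\sum D_i)$, notes the hypothesis gives a square root of $\mathcal{L}|_{X_o}$, and then invokes Lemma 2.2 of \cite{CMR}, whose proof is precisely your parallel-transport argument (flatness of the class $[N_t]$, type $(1,1)$ of the half-class by $\mathbb{C}$-linearity of the $(0,2)$-projection, Lefschetz $(1,1)$, and correction by a square root in the divisible group $\Pic^0(X_t)$). So you have simply unpacked the cited lemma; no gap.
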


\begin{proof}
Let be $\mathcal{L}=\mathcal{O}_{\mathcal{X}}(\sum D_{i})$. By hypothesis there exists a line bundle
 $\mathcal{M}_o$ on $X_o$ such that $\mathcal{L}_{|X_o}=\mathcal{M}_o ^{\otimes 2}$. The results follows from  \cite[Lemma 2.2]{CMR}.
 %
%
\end{proof}

According to our computations (see \cite[Section B]{Ri}), 
one of the $40$-nodal surfaces constructed above is
projectively equivalent to the $\Sigma_5$-invariant surface $\overline X_{40}$ given in $\mathbb P^5$ by
$$x+y+z+w+t+h=0,$$
$$5\left(x^2+\cdots+t^2\right)-7\left(x+\cdots+t\right)^2=0,$$
$$4\left(x^4+\cdots+t^4+h^4\right)-\left(x^2+\cdots+t^2+h^2\right)^2=0.$$
\begin{proposition}\label{2div2}
The nodes of $\overline X_{40}$ are $2$-divisible.
\end{proposition}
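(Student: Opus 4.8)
Since Proposition \ref{2div1} propagates $2$-divisibility from a single member to the whole family, it suffices to treat the explicit surface $\overline X_{40}$. Write $\pi\colon\tilde X\to\overline X_{40}$ for the minimal resolution and $E_1,\dots,E_{40}$ for the $(-2)$-curves over the nodes; the goal is $\sum_i E_i\in 2\operatorname{Pic}(\tilde X)$. The engine of the argument is the trope structure of $I_4$. Each of the ten tropes is a hyperplane $H_{T_k}$ with $I_4\cap H_{T_k}=2D_k$ for a quadric surface $D_k$; intersecting with $Q_2$ gives
\[
\overline X_{40}\cap H_{T_k}=2\,(D_k\cap Q_2)=2\,C_k ,
\]
a doubled curve. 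The linear form defining $H_{T_k}$ is a section of $\pi^\ast\mathcal O(1)$, so its zero divisor lies in the class $\pi^\ast H$; writing that divisor as $2\tilde C_k+\sum_j m_jE_j$ and using $\pi^\ast H\cdot E_j=0$ together with $E_i\cdot E_j=-2\delta_{ij}$ forces $m_j=\tilde C_k\cdot E_j$, which equals $1$ when $C_k$ passes (smoothly) through the node $p_j$ and $0$ otherwise. This yields the exact relation
\[
\pi^\ast H \;=\; 2\tilde C_k+\sum_{j\in S_k}E_j,\qquad S_k:=\{\,j : p_j\in C_k\,\},
\]
hence for every trope the congruence $\sum_{j\in S_k}E_j\equiv\pi^\ast H\pmod{2\operatorname{Pic}(\tilde X)}$.

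The plan is then to combine these ten congruences by $\mathbb F_2$-linear algebra on the incidence vectors $\chi_{S_k}\in\mathbb F_2^{40}$. First I would read off $S_k$ from the geometry: using the $\Sigma_5$-symmetry the forty nodes split into orbits of sizes $20+10+10$ (two orbits of line-nodes and one orbit of tangency nodes), each Igusa line lies on $4$ tropes and each trope contains $6$ of them, so every line-node lies on an even number ($=4$) of tropes, while one computes that each tangency node lies on an \emph{odd} number. Summing all ten relations gives $\sum_k\sum_{j\in S_k}E_j=10\,\pi^\ast H-2\sum_k\tilde C_k\in 2\operatorname{Pic}(\tilde X)$, and the left-hand side equals $\sum_j t_jE_j$ with $t_j$ the number of tropes through $p_j$; reducing mod $2$ the even line-contributions drop out and the odd tangency ones survive, so this single step already shows $\sum_{\text{tangency}}E_j\in 2\operatorname{Pic}(\tilde X)$. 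For the full set one seeks a subset $K\subset\{1,\dots,10\}$ of \emph{even} cardinality with $\sum_{k\in K}\chi_{S_k}=\mathbf 1$ in $\mathbb F_2^{40}$; such a $K$ gives $\sum_jE_j\equiv|K|\,\pi^\ast H\equiv 0\pmod{2\operatorname{Pic}(\tilde X)}$, which is exactly the assertion.

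The main obstacle is producing this even-weight combination, i.e.\ exhibiting ``all forty nodes'' as a symmetric difference of trope-incidence sets. This is a finite parity computation, but it is delicate because an \emph{odd}-weight representation would only give $\sum_jE_j\equiv\pi^\ast H$, reducing the statement to the $2$-divisibility of the canonical class $\pi^\ast H=K_{\tilde X}$---a separate and far less transparent question. I would therefore extract the incidence matrix $(\chi_{S_k})_{k}$ explicitly from the model in \cite{Ri} (the $\Sigma_5$-action reduces this to a few orbit representatives), verify that its $\mathbb F_2$-row space contains $\mathbf 1$ with even weight, and conclude. As a consistency check one confirms that the resulting half-divisor $\mathcal M$, with $\mathcal M^{\otimes 2}\cong\mathcal O_{\tilde X}(\sum_iE_i)$, carries the numerical invariants of the Schoen double cover predicted by \cite{CMR}.
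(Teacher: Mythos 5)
Your overall mechanism is the right one and is essentially the paper's: a hyperplane section of $\overline X_{40}$ that is a doubled curve $2C$ pulls back to the resolution as $2\widehat C+\sum m_jA_j$, the relation $0=\widetilde T\cdot A_j$ forces $m_j=1$ at each node on $C$, and combining an \emph{even} number of such relations makes the $\pi^\ast H$ terms cancel and exhibits a set of nodes as $2$-divisible. The gap is in which tropes you allow yourself and in the unverified incidence claims that your $\mathbb F_2$-linear algebra rests on. You use only the $10$ tropes inherited from the Igusa quartic. The curves $C_k=D_k\cap Q_2$ pass only through the $30$ nodes lying over the $15$ singular lines of $I_4$ (each trope quadric $D_k$ contains $6$ of the lines, hence $12$ line-nodes; each line-node lies on $4$ such tropes), and there is no reason for the $10$ tangency nodes --- where $Q_2$ touches $I_4$ at smooth points --- to lie on any of the fixed trope quadrics of $I_4$. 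Your assertion that each tangency node lies on an \emph{odd} number of these tropes is exactly what you would need, but it is not argued and is false for a general member of the family; if it fails, all ten incidence vectors $\chi_{S_k}$ live in the coordinate subspace $\mathbb F_2^{30}$ of line-nodes (and in fact sum to zero there, since $4$ is even), so no combination can equal $\mathbf 1\in\mathbb F_2^{40}$ and the method cannot reach the tangency nodes at all. On top of this, the decisive finite computation (exhibiting an even-weight combination equal to $\mathbf 1$) is explicitly deferred, so the argument is incomplete even on its own terms.

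The paper avoids both problems by working with the full trope geometry of the \emph{particular} surface $\overline X_{40}$, which has a $(40,12)$ configuration: $40$ tropes (not just the $10$ induced from $I_4$), each containing $12$ nodes, each node on $12$ tropes. From these, four tropes $T_1,\dots,T_4$ are selected by computer so that $C_1\cup C_2$ and $C_3\cup C_4$ each carry $20$ nodes away from the mutual intersection points, and the two sets of $20$ partition all $40$ nodes. Then $\widetilde T_1+\widetilde T_2\equiv 2\widetilde T$ immediately gives $2$-divisibility of the first $20$ nodes, and likewise for the second $20$; no global parity analysis over an incidence matrix, and no statement about $K_{\widetilde X}$ modulo $2$, is ever needed. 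To repair your proof you would have to enlarge your set of tropes to include the extra $30$ hyperplane sections special to $\overline X_{40}$ (which do meet the tangency nodes) and then actually carry out the $\mathbb F_2$ computation --- at which point you have essentially reconstructed the paper's argument in a less economical form.
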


\begin{proof}
One can verify that $\overline X_{40}$ has a $(40,12)$ configuration: $40$ tropes and $40$ nodes,
each trope contains $12$ nodes, through each node pass $12$ tropes.
Using Magma (see \cite[Section C]{Ri}), 
we show the existence of tropes $T_1,\ldots,T_4$ such that:
\begin{description}
\item[$\cdot$] $T_i=2C_i,$ with $C_2,C_3,C_4$ smooth and $C_1$ the union of two conics;
\item[$\cdot$] the singular points of $C_1$ are not in $C_2\cup C_3\cup C_4$;
\item[$\cdot$] $C_1\cup C_2$  contains exactly $20$ nodes of $\overline X_{40}$ which are not in $C_1\cap C_2$;
\item[$\cdot$] $C_3\cup C_4$  contains exactly $20$ nodes of $\overline X_{40}$ which are not in $C_3\cap C_4$;
\item[$\cdot$] the above two sets of $20$ nodes are disjoint.
\end{description}

Let $\widehat X_{40}$ be the smooth minimal model of $\overline X_{40}.$
Denote by $\widetilde T_i$ the total transform of $T_i$ in $\widehat X_{40}$ and
by $\widehat C_i$ the strict transform of $C_i$ in $\widehat X_{40},$ $i=1,\ldots,4.$
There are $(-2)$-curves $A_1,\ldots,A_{22}\subset\widehat X_{40}$ such that
$$\widetilde T_1=2\widehat C_1+\sum_1^{10}n_iA_i+n_{21}A_{21}+n_{22}A_{22},$$
$$\widetilde T_2=2\widehat C_2+\sum_{11}^{20}n_iA_i+n_{21}'A_{21}+n_{22}'A_{22},$$
for some integers $n_1,\ldots,n_{22},n_{21}',n_{22}'.$
From $$0=\widetilde T_j A_i=2\widehat C_j A_i-2n_i=2-2n_i,$$ we get $n_i=1,$ $i=1,\ldots,22,$ and also $n_{21}'=n_{22}'=1.$
So, we have $$2\widetilde T\equiv\widetilde T_1+\widetilde T_2=2\widehat C_1+2\widehat C_2+\sum_1^{20}A_i+2A_{21}+2A_{22},$$
where $\widetilde T$ is the pullback of a general hyperplane section of $\overline X_{40}.$
This shows that $C_1\cup C_2$ contains $20$ nodes of $\overline X_{40}$ which are $2$-divisible.
Analogously, the remaining $20$ nodes of $\overline X_{40},$ contained in $C_3\cup C_4,$ are also $2$-divisible.
\end{proof}
\begin{proposition}
The $40$ nodes of a generic surface $X_{40}$ are $2$-divisible.
\end{proposition}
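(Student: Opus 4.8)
The plan is to combine the two preceding propositions with the fact, recorded just before Proposition~\ref{2div2}, that one member of the constructed family is projectively equivalent to $\overline X_{40}$. Choose a point $o$ in the parameter space for which $X_o$ is the minimal resolution of a $40$-nodal surface projectively equivalent to $\overline X_{40}$. Proposition~\ref{2div2} asserts that the sum of the $40$ $(-2)$-curves on $X_o$ is $2$-divisible in $\Pic(X_o)$, and projective equivalence preserves this property; hence the hypothesis of Proposition~\ref{2div1} is satisfied at $o$. Applying Proposition~\ref{2div1} produces a polydisk $B$ around $o$ over which the sum of the $40$ $(-2)$-curves on every fibre $X_t$ is $2$-divisible.

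It remains to pass from ``$2$-divisible on the analytic neighbourhood $B$'' to ``$2$-divisible for the generic $X_{40}$''. Let $U$ be the (irreducible) locus of the parameter space over which $X_{40}(t)$ has exactly $40$ nodes; over $U$ the resolutions $X_t$ form a smooth family $\pi:\mc X\to U$ of regular surfaces, since each $X_t$ is the minimal resolution of a $(2,4)$ complete intersection in $\m P^4$ and therefore has $q=0$. The forty $(-2)$-curves are cut out on the fibres by divisors $D_1,\dots,D_{40}$ on $\mc X$ (defined after an étale base change if the nodes are permuted by monodromy), and their total class $\delta_t:=\sum_i[D_i]\in H^2(X_t,\m Z)$ is canonically defined, independent of any labelling, hence descends to a monodromy-invariant global section of the local system $R^2\pi_*\m Z$.

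Next I would reduce divisibility in the Picard group to divisibility in integral cohomology. Each $D_i$ is algebraic, so $\delta_t$ is of type $(1,1)$ on every fibre; and if $2\alpha=\delta_t$ with $\alpha\in H^2(X_t,\m Z)$, then comparing Hodge components forces $\alpha$ to be of type $(1,1)$ as well. Since $X_t$ is regular, the Lefschetz $(1,1)$ theorem identifies $\Pic(X_t)$ with $H^{1,1}(X_t)\cap H^2(X_t,\m Z)$, so $2$-divisibility of $\delta_t$ in $\Pic(X_t)$ is \emph{equivalent} to its $2$-divisibility in the lattice $H^2(X_t,\m Z)$. The latter is manifestly locally constant: the monodromy acts by isometries fixing $\delta_t$, hence preserves its divisibility, so over the connected base $U$ the class $\delta_t$ is $2$-divisible either at every point or at none. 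As it is $2$-divisible at $o$ by the first paragraph, it is $2$-divisible on all of $U$, in particular for the generic $X_{40}$.

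The step I expect to be the main obstacle is exactly this final passage: Proposition~\ref{2div1} is purely local, and one must be sure the divisibility genuinely propagates over the whole irreducible parameter space rather than merely over $B$. The reduction to the integral lattice is what makes this rigorous, and it rests on the regularity of the fibres together with the existence of a smooth equisingular family over $U$; I would therefore take some care to verify that, after discarding the proper closed subset where additional or worse singularities occur, the remaining base is connected and carries a smooth family of regular surfaces, so that the monodromy argument indeed applies.
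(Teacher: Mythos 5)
Your proof is correct and takes the same route as the paper, whose entire argument is the one-line ``Immediate from Propositions \ref{2div1} and \ref{2div2}.'' The extra work you do --- reducing $2$-divisibility in $\Pic(X_t)$ to $2$-divisibility of the monodromy-invariant class $\sum_i[D_i]$ in the lattice $H^2(X_t,\m Z)$ via regularity and Lefschetz $(1,1)$, so that the property propagates from the polydisk $B$ to the whole irreducible equisingular locus --- is a sound justification of the local-to-generic passage that the paper leaves implicit.
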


\begin{proof}
Immediate from Propositions \ref{2div1} and \ref{2div2}.
\end{proof}

\subsection{Surfaces with $K_S^2=2c_2=16,\,q=4$}

From the previous section, for a surface $X_{40}$ with exactly $40$ nodes as constructed above there is a
double covering $$\pi:S\longrightarrow X_{40}$$ ramified over the nodes.
\begin{proposition}
We have $$p_g(S)=5,\ q(S)=4, \ K_S^2=16.$$
\end{proposition}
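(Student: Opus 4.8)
The plan is to compute the invariants of the smooth minimal model $S$ of the double cover $\pi:S\to X_{40}$ branched over the $40$ nodes, by resolving the singularities of $X_{40}$ first and then applying the standard formulas for a double cover whose branch locus is a $2$-divisible even set of $(-2)$-curves. Let $\widetilde X\to X_{40}$ be the minimal resolution, so that the $40$ nodes are replaced by $40$ disjoint $(-2)$-curves whose sum $\Delta=\sum_{i=1}^{40}A_i$ is $2$-divisible in $\operatorname{Pic}(\widetilde X)$; write $\Delta\equiv 2L$ for some line bundle $L$. The canonical double cover determined by the data $(\Delta,L)$ produces a smooth surface $\widehat S\to\widetilde X$, and $S$ is its minimal model. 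The invariants of $S$ are then read off from those of $\widetilde X$ together with $L$.

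First I would fix the numerics of $\widetilde X$. Since $X_{40}=I_4\cap Q_2$ is a degree-$8$ complete intersection of a quadric and a quartic in $\mathbb P^4$, its smooth complete-intersection model has known invariants: $K^2=(2+4-5)^2\cdot 8=8$, $\chi(\mathcal O)=\tfrac{1}{12}(K^2+c_2)$, and $p_g$ computed from the adjunction $K\equiv\mathcal O(2+4-5)=\mathcal O(1)$, giving $K\cdot K=8$ and $p_g$ equal to $h^0(\mathcal O_{X}(1))=5$ (the five hyperplane coordinates restricted to the surface). Resolving $40$ nodes does not change $K^2$, $p_g$ or $q$ of the minimal model of the resolution, since each node contributes a $(-2)$-curve orthogonal to the canonical class. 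Thus I expect $K_{\widetilde X}^2=8$, $p_g(\widetilde X)=5$, $q(\widetilde X)=0$ (the quartic–quadric surface is regular), and $\chi(\mathcal O_{\widetilde X})=6$.

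Next I would apply the double-cover formulas. For a smooth double cover $\widehat S\xrightarrow{\pi}\widetilde X$ with branch divisor $\Delta\equiv 2L$, one has $K_{\widehat S}\equiv\pi^*(K_{\widetilde X}+L)$, whence
\[
K_{\widehat S}^2=2\,(K_{\widetilde X}+L)^2,\qquad
\chi(\mathcal O_{\widehat S})=2\,\chi(\mathcal O_{\widetilde X})+\tfrac12\,L\cdot(L+K_{\widetilde X}).
\]
The key arithmetic input is $L^2$ and $L\cdot K_{\widetilde X}$. Because $2L\equiv\Delta=\sum A_i$ with the $A_i$ disjoint $(-2)$-curves, we get $4L^2=\Delta^2=\sum A_i^2=-80$, so $L^2=-20$, and $2\,L\cdot K_{\widetilde X}=\Delta\cdot K_{\widetilde X}=0$, so $L\cdot K_{\widetilde X}=0$. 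Substituting, $K_{\widehat S}^2=2(8-20)=-24$ on $\widehat S$, which signals that $\widehat S$ is not minimal: the preimages of the $40$ branch curves are $(-1)$-curves contracted in passing to $S$, each contributing $+1$ to $K^2$, and $K_S^2=K_{\widehat S}^2+40=16$. For the holomorphic Euler characteristic, $\chi(\mathcal O_{\widehat S})=2\cdot 6+\tfrac12(-20+0)=12-10=2$, and this is a birational invariant, so $\chi(\mathcal O_S)=2$. Finally I would pin down $q$ and $p_g$ separately. The irregularity is $q(S)=q(\widetilde X)+h^0(\widetilde X,K_{\widetilde X}+L)$ coming from the anti-invariant part of $H^1$; by Theorem~\ref{thmSc} and the fact that these are Schoen surfaces we expect $q=4$, and then $p_g=\chi-1+q=2-1+4=5$, matching the asserted values.

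The main obstacle is the computation of $q(S)=4$: the formula $K_{\widehat S}^2$ and $\chi$ follow mechanically, but the irregularity requires identifying the anti-invariant cohomology $H^1(\widetilde X,\mathcal O(-L))$, equivalently $h^0(\widetilde X,K_{\widetilde X}+L)$ by Serre duality, which is genuinely a vanishing/non-vanishing statement about the specific line bundle $L$ with $L^2=-20$ arising from our $2$-divisible node set and is not forced by the numerics alone. The cleanest route is to invoke the identification already established in Section~\ref{construction}, namely that $S$ is a Schoen surface, so that Theorem~\ref{thmSc} supplies $q=4$ (and hence $p_g=5$) directly; the present proposition then amounts to checking consistency of the double-cover computation of $\chi(\mathcal O_S)=2$ and $K_S^2=16$ with those known invariants, and deducing $p_g=5$ from $\chi=p_g-q+1$.
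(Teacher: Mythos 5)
Your computations of $K_S^2=16$ and $\chi(\mathcal O_S)=2$ are correct and essentially the paper's (the paper obtains $K_S^2$ slightly more directly from the fact that $K_S=\pi^*K_{X_{40}}$ is nef, whereas you pass through the resolution and contract the forty $(-1)$-curves; both are fine). The genuine gap is exactly where you locate it yourself: splitting $\chi=2$ into $p_g=5$ and $q=4$. Your proposed fix --- invoking ``the identification already established in Section~\ref{construction}, namely that $S$ is a Schoen surface'' --- is circular in the paper's logical order: that identification is only proved afterwards (Section~\ref{degn}), and the purpose of the present proposition is precisely to establish the invariants by hand. A secondary slip: the anti-invariant contribution to the irregularity of the double cover is $h^1(\widetilde X,-L)=h^1(\widetilde X,K_{\widetilde X}+L)$ by Serre duality, not $h^0(\widetilde X,K_{\widetilde X}+L)$, which computes the anti-invariant part of $p_g$ instead.

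The missing step, which the paper supplies and which is self-contained, runs as follows. Since the canonical map of $S$ factors through $\pi$, one has $p_g(S)\ge p_g(X_{40})=5$, hence $q(S)=p_g(S)-1\ge 4$ from $\chi=2$. Suppose $q(S)\ge 5$. The inequality $p_g\ge 2q-4$ of \cite[Beauville Appendix]{De}, combined with $p_g=q+1$, forces $q=5$, $p_g=6$ and equality in $p_g=2q-4$, which occurs only when $S$ is the product of a genus $2$ curve and a genus $q-2=3$ curve. But then the canonical map of $S$ restricted to a genus $2$ fibre $F$ is the degree $2$ canonical map $F\to\mathbb P^1$, so $\pi|_F$ maps $F$ with degree $\ge 2$ onto a rational curve, and these curves sweep out $X_{40}$ --- impossible since $X_{40}$ is of general type and hence not ruled. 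This excludes $q\ge 5$ and yields $q=4$, $p_g=5$. You should supply this (or an equivalent) argument rather than appealing to the Schoen identification.
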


\begin{proof}
The canonical line bundle $K_S$ is the  the pullback of  $K_{X_{40}}$ which is nef, thus $S$ is minimal and $K_S^2=2K_{X_{40}}^2=16$.\\
Let $\widehat X_{40}$ be the smooth minimal model of $X_{40},$ $A_1,\ldots,A_{40}$ be the $(-2)$-curves
which contract to the nodes  of $X_{40}$ and $S'\rightarrow\widehat X_{40}$ be the double covering with
branch locus $\sum_1^{40} A_i.$
The minimal model of $S'$ is isomorphic to $S.$

Let $L$ be the divisor such that $\sum_1^{40} A_i\equiv 2L.$ The double covering formulas (see e.g. \cite[V. 22]{BHPV}) give
$$\chi(S)=2\chi\left(\widehat X_{40}\right)+\frac{1}{2}L\left(K_{\widehat X_{40}}+L\right)=12-10=2.$$

Let us compute $p_g(S).$ We have that $p_g(S)\geq p_g\left(X_{40}\right)=5,$ thus $q(S)\geq 4.$
Suppose that $q(S)\geq 5$.
We know from \cite[Beauville Appendix]{De} that one always has $p_g(S)\geq 2q(S)-4,$ with equality
only if $S$ is the product of a curve of genus $2$ and a curve of genus $q(S)-2\geq 2.$
Thus $p_g(S)=q(S)+1$ implies that $q(S)=5,p_g(S)=6$ and $S$ is the product of a genus $2$ curve with a genus $3$ curve.
The restriction of the canonical map of $S$ to a genus $2$ fibre $F$ is a map of degree $2$ to $\mathbb P^1,$
the canonical map of $F$.
Hence the map $\pi|_F$ is of degree $\geq 2$ to $\mathbb P^1.$
This is a contradiction because, since $X_{40}$ is of general type, it is not a ruled surface.
\end{proof}

\begin{proposition}
The surface $S$ is not covered by the bidisk $\mathbb H\times\mathbb H.$
\end{proposition}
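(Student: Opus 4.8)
The plan is to use the Miyaoka--Yau type characterization of surfaces uniformized by the bidisk. Recall that a minimal smooth surface of general type satisfies $K_S^2 \leq 3c_2(S)$, with equality if and only if $S$ is a ball quotient, i.e. covered by $\mathbb{H}^2 = \mathbb{H} \times \mathbb{H}$ is the \emph{other} extremal case. More precisely, the relevant result (due to Hirzebruch, and refined for the bidisk case) is that a minimal surface of general type with $K_S^2 = 2c_2(S)$ is covered by the bidisk $\mathbb{H}\times\mathbb{H}$ if and only if its cotangent bundle decomposes appropriately, or equivalently only if a certain rigidity / fibration structure is present. Since we have computed $K_S^2 = 16 = 2c_2(S)$, the surface $S$ lies exactly on the boundary line $K_S^2 = 2c_2$, which is precisely the line where bidisk quotients can occur; so the equality does not by itself decide the question and we must rule out the bidisk covering by a finer argument.

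First I would recall the Bogomolov--Miyaoka--Yau inequality and the theorem (see for instance the discussion in \cite{CMR}) which states that a minimal surface $S$ of general type with $K_S^2 = 2c_2$ is uniformized by the bidisk if and only if there exist two holomorphic foliations, equivalently a splitting of the (pulled-back) cotangent bundle into a direct sum of two line bundles, which descends to give two fibration-like structures. Such a splitting forces the existence of specific subspaces in $H^0(S, \Omega_S^1)$ realizing the factors, and in particular produces elements of the form $\omega_1 \wedge \omega_2$ in the kernel of the wedge map $\wedge^2 H^0(S,\Omega_S^1) \to H^0(S, K_S)$.

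The key step, and the main obstacle, will be to exploit the Lagrangian property of $S$ emphasized in the Introduction: the map $\wedge^2 H^0(S,\Omega_S^1) \to H^0(S,K_S)$ has a one-dimensional kernel which is \emph{not} decomposable as $\omega_1 \wedge \omega_2$. This is exactly the obstruction to the bidisk covering. I would argue that if $S$ were covered by $\mathbb{H}\times\mathbb{H}$, then the two foliations would produce a decomposable element in the kernel of the wedge map (each leaf direction giving a one-form, whose wedge vanishes when restricted along the foliation), contradicting the indecomposability established for Schoen surfaces. Concretely, the bidisk structure yields two transverse holomorphic one-dimensional foliations with associated one-forms $\omega_1, \omega_2$, and the corresponding $\omega_1 \wedge \omega_2$ must span the kernel of the wedge map; since the actual kernel generator is indecomposable, no such foliations exist.

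The delicate point to verify carefully is that the bidisk covering genuinely forces a \emph{decomposable} kernel element, rather than merely some abstract splitting that need not interact with $H^0(\Omega_S^1)$. I expect this to follow by pulling back to the universal cover, where $\Omega^1$ splits as the sum of the two factor directions, and checking that the global holomorphic one-forms respect this splitting because they descend from the invariant forms on $\mathbb{H}\times\mathbb{H}$; then the wedge of a form of each type lies in the kernel and is visibly decomposable. Tying this back to the statement of the Castelnuovo--De Franchis mechanism recalled in the Introduction will complete the argument, and the whole proof reduces to invoking the already-established $K_S^2 = 2c_2$ together with the non-decomposability of the Lagrangian kernel.
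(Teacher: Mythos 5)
Your approach is genuinely different from the paper's, but it has two serious gaps. The paper argues group-theoretically: writing $S=\mathbb H\times\mathbb H/\Gamma$, it splits into the case where $\Gamma_0=\Gamma\cap\mathrm{Aut}(\mathbb H)^2$ is an irreducible lattice, killed by the Matsushima--Shimura vanishing $b_1=0$ against $q(S)=4$, and the reducible case, where $S$ is isogenous to a product of curves and the Bauer--Naie-type classification of such surfaces with $p_g=5$, $q=4$ shows their canonical image is not of general type, contradicting the fact that the canonical image of $S$ is $X_{40}$. Your argument instead hinges on the Lagrangian property that the kernel of $\wedge^2H^0(S,\Omega_S)\to H^0(S,K_S)$ is one-dimensional and indecomposable. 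The first problem is that, at the point where this proposition sits, that property is \emph{not available} for the surfaces $S$ constructed in Section \ref{construction}: it is a known feature of Schoen surfaces (quoted in the Introduction from the literature), but the identification of the constructed $S$ with Schoen's family is proved only in the following subsection (Section \ref{degn}), and nothing proved so far about $S$ rules out a decomposable element of the kernel, i.e.\ a fibration onto a curve of genus $\geq 2$. Using the indecomposability here is therefore circular relative to the paper's logic, or at best imports a nontrivial unproved claim about the explicit surfaces.

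The second problem is that, even granting the Lagrangian property, the implication ``bidisk quotient $\Rightarrow$ decomposable kernel element'' does not hold as stated. A bidisk quotient carries two foliations given by line subbundles $L_1,L_2$ of $\Omega^1$, not by global one-forms; one only gets $H^0(\Omega^1)=H^0(L_1)\oplus H^0(L_2)$ when $\Gamma$ does not swap the factors, and a decomposable kernel element only when one of these summands has dimension $\geq 2$. When the lattice is irreducible both summands vanish ($b_1=0$ by Matsushima--Shimura), so your foliations produce no forms at all and the contradiction must come from $q=4$ directly --- a case your sketch never isolates. When $\Gamma$ contains factor-swapping elements, the splitting exists only on an \'etale double cover $S_0$, and an element $\omega\wedge\omega'$ with $\omega,\omega'\in H^0(L_1)$ on $S_0$ does not descend to a decomposable element of $\wedge^2H^0(S,\Omega_S)$ (the invariant forms are of the shape $\omega+\iota^*\omega$ and their wedges are not obviously zero). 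Finally, a small but telling confusion in your opening paragraph: $K^2=3c_2$ characterizes quotients of the complex $2$-ball, not of $\mathbb H\times\mathbb H$, and $K^2=2c_2$ is far from characterizing bidisk quotients --- which is precisely what makes this proposition nontrivial.
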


\begin{proof}
If $S$ is universally covered by $\mathbb H\times\mathbb H,$ then it is the quotient of $\mathbb H\times\mathbb H$ by a discrete
cocompact subgroup $\Gamma$ of
$\text{Aut}(\mathbb H\times\mathbb H)=\text{Aut}(\mathbb H)^2\rtimes (\mathbb Z/2\mathbb Z)$
acting freely. 
Let $$\Gamma_0:=\Gamma\cap\text{Aut}(\mathbb H)^2$$
and $\Gamma_0', \Gamma_0''$ be the projections of $\Gamma_0$ to the factors of $\text{Aut}(\mathbb H)\times\text{Aut}(\mathbb H).$
By \cite[Theorem 1]{Sh}, if one of $\Gamma_0',$ $\Gamma_0''$ is discrete, so is the other. In this case we say that
$\Gamma$ is {\it reducible}.

If $\Gamma$ is irreducible, we know from \cite[page 419]{MS} that
$$b_1(\mathbb H\times\mathbb H/\Gamma_0)=b_1(\mathbb P^1\times\mathbb P^1)=0.$$
This is impossible because $2q=b_1$ and $q(S)=4.$

So, $\Gamma$ is reducible and then $\Gamma_0$ is a finite index subgroup of $\Gamma_0'\times\Gamma_0''.$ It follows that
$\mathbb H\times\mathbb H/\Gamma_0$ is a covering of the product of two curves $\mathbb H/\Gamma_0'\times\mathbb H/\Gamma_0''.$
We {\em claim} that $S$ is {\it isogenous} to a product of curves
(i.e. it is a quotient of a product of curves by a fixed-point free group action).
In fact, there exists a normal sub-lattice $\Gamma_1$ of $\Gamma_0,$ of finite index, of the form
$$\Gamma_1'\times\Gamma_1''\subset\Gamma_0\subset\Gamma_0'\times\Gamma_0''.$$
This implies the existence of an \'etale map
$$\mathbb H\times\mathbb H/\Gamma_1=
\mathbb H/\Gamma_1'\times\mathbb H/\Gamma_1''
\longrightarrow\mathbb H\times\mathbb H/\Gamma_0,$$
the action being given by $\Gamma_0/\Gamma_1.$

Surfaces with $p_g=5$ and $q=4$ isogenous to a product of curves are classified in \cite{BNP}.
They are of the form $(C\times H)/(\mathbb Z/2\mathbb Z),$ where:
\begin{description}
\item[a)] $C$ and $H$ are curves of genus $3$ with fixed-point free involutions, or
\item[b)] $C$ is a curve of genus $5$ with a fixed-point free involution and $H$ is a bielliptic curve of genus $2$.
\end{description}
We know from \cite[Theorem 3.4]{Po} that the curves in a) are hyperelliptic, hence in both cases the canonical map
factors through a double covering of a ruled surface and then the canonical image is not of general type.
This implies that $S$ is not isogenous to a product of curves, a contradiction.
\end{proof}

\subsection{The degeneration}\label{degn}

\begin{proposition}
The family of surfaces $S$ constructed above coincides with the family of surfaces constructed by Schoen in \cite{Sc}.
\end{proposition}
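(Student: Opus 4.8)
The plan is to identify the two families through the degeneration recorded in Proposition~\ref{propCMR}, combined with the local irreducibility of Schoen's deformation space from Theorem~\ref{thmSc}. At this point we already know that a generic member $S$ of our family is a minimal surface of general type with $p_g(S)=5$, $q(S)=4$ and $K_S^2=16$, which are exactly Schoen's invariants; moreover, by the discussion in Section~\ref{2div}, our surfaces are parametrized by (an open subset of) the $4$-dimensional dual space $(\mathbb P^4)^*$ through the hyperplane $H=H_{a,b,c,d}$. Thus both families have dimension at most $4$, and the task is to show they fill the same component of the moduli space.

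First I would realize the reducible limit of Proposition~\ref{propCMR} explicitly inside our family. Since $I_4$ carries $10$ tropes, I would let the $4$-parameter quadric $Q_{a,b,c,d}$ specialize to a rank-drop limit $H_1\cdot H_2$, a product of two hyperplanes with $H_1$ chosen so that $I_4\cap H_1$ is a trope, i.e. a double quadric $2Q$. Then $X_{40}=I_4\cap Q_{a,b,c,d}$ degenerates to $(I_4\cap H_1)\cup(I_4\cap H_2)=2Q\cup(I_4\cap H_2)$, the union of a double quadric surface and the quartic Kummer surface $I_4\cap H_2$ glued along the trope conic $I_4\cap H_1\cap H_2$. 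This is precisely the intersection of two hyperplanes and the quartic $I_4$ appearing in Proposition~\ref{propCMR}. Using the explicit equations of \cite{Ri}, I would check that this boundary point lies in the closure of our parameter space and that the $40$ nodes together with their $2$-divisible structure specialize compatibly, so that the double cover $\pi\colon S\to X_{40}$ degenerates to the double cover induced on the reducible limit.

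By the final assertion of Proposition~\ref{propCMR}, this is exactly the degeneration that induces Schoen's degeneration; hence in this limit our surface $S$ degenerates to Schoen's reducible surface $V=C\times C\cup_C J(C)$. Our family is therefore a $4$-dimensional family of deformations of $V$, and its classifying map lands in the deformation space of $V$. By Theorem~\ref{thmSc} this deformation space is locally smooth, hence locally irreducible, of dimension $4$; since an irreducible germ of dimension $4$ has no proper closed subvariety of the same dimension, a $4$-dimensional family inside it must fill it out. Consequently the generic members of our family are identified with the generic deformations of $V$, so the two families coincide.

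The hard part will be the second step: verifying that the trope-type degeneration $Q_{a,b,c,d}\to H_1\cdot H_2$ is genuinely attained in the closure of our specific $4$-parameter family (and not only abstractly), and that the induced double cover of the reducible limit is really $V$ --- matching both the gluing along the trope and the branch data --- rather than merely a surface with the same numerical invariants. A secondary but necessary point is to confirm that our construction gives an honest $4$-dimensional family, i.e. that the classifying map from $(\mathbb P^4)^*$ to moduli is generically finite; this is expected from the fact that the Magma computations of \cite{Ri} produce genuinely varying surfaces, and it is what licenses the dimension count in the irreducibility argument.
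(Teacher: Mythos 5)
Your proposal follows essentially the same route as the paper: combine Proposition \ref{propCMR} with the local irreducibility from Theorem \ref{thmSc}, and exhibit the degeneration of $X_{40}=I_4\cap Q_{a,b,c,d}$ to the union of a trope (double quadric) and a tangent hyperplane section (Kummer surface) of $I_4$, glued along a trope of the Kummer surface. The paper realizes your ``rank-drop limit'' concretely by resolving the indeterminacy of the correspondence $H_{a,b,c,d}\mapsto Q_{a,b,c,d}$ at a trope of $I_4$ (where $F_{a,b,c,d}$ vanishes identically, so that after blowing up $F_{a,ab,ac-1,ad-1}=a^3\cdot G_{a,b,c,d}$ with $G_{0,b,c,d}=x\cdot J_{b,c,d}$), and it handles your ``secondary point'' about generic finiteness by observing that the resulting boundary is already a $3$-dimensional family of reducible surfaces.
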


\begin{proof}
The deformation space in Schoen's construction is locally irreducible (see Theorem \ref{thmSc}), hence we get from
Proposition \ref{propCMR} that it suffices to show that the $4$-dimensional family of surfaces $X_{40}$ degenerates
to a $3$-dimensional family of reducible surfaces which are the union of a double quadric surface and a quartic Kummer surface,
glued along a trope of the Kummer surface.

Let us consider the Igusa quartic given by the equation
\begin{equation*}
\begin{split}
4(x^4+y^4+z^4+w^4+t^4+h^4) & = (x^2+y^2+z^2+w^2+t^2+h^2)^2, \\
h: & = -x-y-z-w-t,
\end{split}
\end{equation*}
in $\mathbb P^4(x,y,z,w,t).$
Recall from section \ref{40nodal} that to a generic hyperplane section $$H_{a,b,c,d}:=ax+by+cz+dw-t$$
of the Igusa quartic corresponds a quadric $Q_{a,b,c,d}$ such that $X_{40}:=I_4\cap Q_{a,b,c,d}$
(notice that the two $I_4$ that appear in Figure \ref{fig1} denote isomorphic surfaces that are given by different equations in our computations).
Let $$F_{a,b,c,d}=c_1x^2+\cdots+c_{15}wt,\ \ \ c_i=c_i(a,b,c,d),$$
be the defining polynomial of $Q_{a,b,c,d}$ in $\mathbb P^4.$
The correspondence $$H_{a,b,c,d}\mapsto Q_{a,b,c,d}$$ is a rational map
$$\varphi:\mathbb A^4\dashrightarrow\mathbb P^{14}=\mathbb{P}(H^0(\mathbb{P}^4,\mathcal{O}(2))),\ \ \ (a,b,c,d)\longmapsto (c_1:\cdots:c_{15}).$$
%
%
From our computations (see \cite{Ri}), if $H_{a,b,c,d}$ gives a trope of $I_4,$
then $F_{a,b,c,d}$ vanishes identically.
This happens for instance for $$(a,b,c,d)=(0,0,-1,-1).$$
We resolve the corresponding indeterminacy of $\varphi$ by blowing up:
locally, this is done by evaluating the coefficients $c_i$ at \mbox{$(a,ab,ac-1,ad-1).$}
The computations give that $$F_{a,ab,ac-1,ad-1}=a^3\cdot G_{a,b,c,d},$$ with $G$ of degree $2.$
Moreover, there exists a linear form $J_{b,c,d}$ such that $$G_{0,b,c,d}=x\cdot J_{b,c,d}.$$
The hyperplane $\{x=0\}$ gives a trope of $I_4$ (a double quadric). For generic values of the parameters,
the hyperplane given by $J_{b,c,d}$ is tangent to $I_4$ at a point (it gives a quartic Kummer surface)
and the quadric given by $G_{a,b,c,d}$ meets $I_4$ at a $40$-nodal surface.
One can verify from the equations obtained in \cite{Ri} that these quadric and Kummer are
glued along a trope of the Kummer surface.
\end{proof}

\section{The surface $\overline{X}_{40}$ with $\Sigma_{5}$ symmetries}

In this section we study a surface $\overline{S}$ which is the double
cover of a particular $40$-nodal degree $8$ complete intersection
surface with a high group of symmetries.
Using these symmetries we prove that its Picard number is maximal and we find the isogeny
class of its Albanese variety. We moreover describe another construction
of a $3$-dimensional subfamily of Schoen surfaces as bidouble covers
of some special Kummer surfaces.

\subsection{Some Schoen surfaces as bidouble covers}

Recall from Section \ref{2div} the complete intersection $\overline{X}_{40}\subset\mathbb P^4$
of the following quadric and quartic: 
$$5\left(x^{2}+\cdots+t^{2}\right)-7(x+\cdots+t)^{2}=0,$$
$$4\left(x^{4}+\cdots+t^{4}+h^{4}\right)-\left(x^{2}+\cdots+t^{2}+h^{2}\right)^{2}=0,$$
where $h=-(x+y+z+w+t)$. The surface $\overline{X}_{40}$ has 40 nodes
(defined over the field $\mathbb Q(\sqrt{-15})$). The permutation group
$\Sigma_{5}$ is a subgroup of ${\rm aut}(\overline{X}_{40})$, the automorphism
group of $\overline{X}_{40}$. 

Let $\overline{S}\to\overline{X}_{40}$ be the double cover branched
over the $40$ nodes and let $\sigma$ be the corresponding involution of $\overline{S}.$ 
Let $\hat{X}_{40}$ be the minimal resolution  of $\overline{X}_{40}$.
By the argument given in the proof of Proposition \ref{2div1}, the integral cohomology group $H^2(\hat{X}_{40},\mathbb{Z})$  is torsion free. Thus the N\'eron-Severi group  $NS(\hat{X}_{40})$ being a  subgroup of $H^2(\hat{X}_{40},\mathbb{Z})$ is also torsion free.
We note also that any automorphism in $\Sigma_{5}$ preserves the
set of nodes. Then by \cite[\S 1.3, Theorem 1 e)]{Li}, each element
of $\Sigma_{5}$ lifts to an automorphism of $\overline{S}$. We have:
\begin{proposition}
\label{prop:action transposition}
Let $\tau\in\Sigma_{5}$ be a transposition. The quotient surface
$$Q:=\overline{X}_{40}/\tau$$ is a $K3$ surface with $15$
nodes containing in the smooth locus two $(-2)$-curves $A_{16}$ and $A_{16}'$
such that $A_{16}A_{16}'=10$. The double cover $\overline{X}_{40}\to Q$ is
branched over $A_{16}+A_{16}'$. \\
Let $A_{1},\dots,A_{15}$ be the $15$ \textup{$(-2)$}-curves in
the resolution $\hat{Q}$ of $Q$. The divisors $A_{16}+\sum_{i=1}^{15}A_{i}$
and $A_{16}'+\sum_{i=1}^{15}A_{i}$ are $2$-divisible. The bidouble
cover $\hat{S}\to\hat{Q}$ associated to the divisors
\[
D_{1}=\sum_{i=1}^{15}A_{i},\,D_{2}=A_{16},\,D_{3}=A_{16}'
\]
gives the blow-up $\hat{S}\rightarrow\overline{S}$ at the $40$ fixed
points of $\sigma$; the bidouble cover decomposes as
\[
\arraycolsep=6pt\def\arraystretch{1.2}
\begin{array}{ccccc}
 &  & \hat{S}\\
 & \swarrow & \downarrow & \searrow\\
\hat{B}_{1} &  & \hat{X}_{40} &  & \hat{B}_{2}\\
 & \searrow & \downarrow & \swarrow\\
 &  & \hat{Q}
\end{array}
\]
where $\hat{B}_{1},\hat{B}_{2}$ are Abelian surfaces $B_1$, $B_2$ blown-up at their $2$-torsion points, each map $\hat{S}\to\hat{B_{i}}$
is a double cover branched over a curve of genus $4$, and the maps
$\hat{B}_{i}\to\hat{Q}$, $i=1,2$ are branched over $D_{1}+D_{2}$
and $D_{1}+D_{3},$ respectively. \\
The group generated by the lifts of $\tau$ on $\overline{S}$ is
$(\mathbb Z/2\mathbb Z)^{2}$ and it contains $\sigma$.
\end{proposition}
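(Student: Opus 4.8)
The plan is to analyse $\tau$ on the explicit model of $\b X_{40}$, build the quotient $Q$ directly, and then recover $\hat S$ as a bidouble cover of $\hat Q$, reading the remaining assertions off this tower. Take $\tau$ to be the transposition $x\leftrightarrow y$. On $\m P^4=\{x+y+z+w+t+h=0\}$ its fixed locus is $\{x=y\}$ together with $[1:-1:0:0:0:0]$, and substituting into the quadric shows the latter point is not on $\b X_{40}$; hence $\Fix(\tau)\cap\b X_{40}=R:=\b X_{40}\cap\{x=y\}$ is a curve. Using the coordinates of the $40$ nodes over $\m Q(\sqrt{-15})$ I would check that $\tau$ exchanges $30$ of them in $15$ pairs and fixes the remaining $10$, which then lie on $R$. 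The key linear-algebra input is that, under the adjunction identification $H^0(K_{\hat X_{40}})\cong H^0(\mc O(1))=\langle x,y,z,w,t\rangle$, the action of $\tau$ is twisted by $\det=-1$ because a transposition is odd; thus the $\tau$-invariant $2$-forms reduce to $\langle\omega_{x-y}\rangle$, giving $p_g(Q)=1$ and $q(Q)=q(\hat X_{40})=0$.

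Next I would show $Q=\b X_{40}/\tau$ is a K3 surface with exactly $15$ nodes. The $15$ exchanged pairs descend to $15$ nodes of $Q$ away from the branch locus, producing the $(-2)$-curves $A_1,\dots,A_{15}$ of $\hat Q$; together with $p_g=1$, $q=0$ and the value of $K_Q$ from the double-cover formula for $\b X_{40}\to Q$ this identifies $\hat Q$ as a K3. The ramification curve $R$ maps isomorphically onto the branch curve, and the crux is the local analysis at the $10$ fixed nodes: I would show their images are smooth points of $Q$ at which the branch curve has an ordinary node, so that it splits as $A_{16}\cup A_{16}'$ into two smooth rational curves meeting precisely at those $10$ points. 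This yields $A_{16}A_{16}'=10$ and puts $A_{16},A_{16}'$ in the smooth locus, disjoint from $A_1,\dots,A_{15}$.

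Two Kummer structures then emerge. The families $A_1,\dots,A_{16}$ and $A_1,\dots,A_{15},A_{16}'$ each consist of $16$ disjoint $(-2)$-curves on the K3 surface $\hat Q$, so by Nikulin's characterisation of Kummer surfaces each forces the corresponding sum $D_1+D_2$, resp.\ $D_1+D_3$, to be $2$-divisible and realises its double cover as $\hat B_i\to\hat Q$ with $\hat B_i$ the blow-up of an abelian surface $B_i$ at its $16$ two-torsion points; $2$-divisibility of $D_2+D_3=A_{16}+A_{16}'$ is then automatic. These data define a $(\m Z/2\m Z)^2$-cover $\hat S'\to\hat Q$ with intermediate quotients branched over $D_2+D_3$, $D_1+D_2$, $D_1+D_3$, i.e.\ $\hat X_{40}$, $\hat B_1$, $\hat B_2$. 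Matching the cover branched over $A_{16}+A_{16}'$ with $\b X_{40}\to Q$, and the induced $\hat S'\to\hat X_{40}$ with the Schoen cover branched over the $40$ $(-2)$-curves ($2$-divisible by Proposition~\ref{2div2}), should identify $\hat S'$ with $\hat S=\mathrm{Bl}_{40}(\b S)$. The branch of $\hat S\to\hat B_i$ is the preimage of $A_{16}'$ (resp.\ $A_{16}$), a double cover of $\m P^1$ ramified at the $10$ crossing points, hence a smooth curve of genus $(10-2)/2=4$.

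Finally, the $(\m Z/2\m Z)^2$-action on $\hat S'=\hat S$ descends to $\b S$ after blowing down; the involution with quotient $\hat X_{40}$ is $\sigma$, while the other two non-trivial elements induce $\tau$ downstairs and are genuine involutions, so the lifts of $\tau$ generate $(\m Z/2\m Z)^2\ni\sigma$. The step I expect to be hardest is the local bookkeeping at the $10$ fixed nodes: one must show at once that they become smooth points of $Q$, that the branch curve splits transversally there (so $A_{16}A_{16}'=10$ and the genus is $4$), and that resolving the non-transverse curve $A_{16}\cup A_{16}'$ makes the abstract bidouble cover reproduce exactly $\mathrm{Bl}_{40}(\b S)$; the irregularity check $q(\hat B_i)=2$, forced by the decomposition of $H^1(\hat S)$ under $(\m Z/2\m Z)^2$ with $q(\hat X_{40})=0$ and $q(\b S)=4$, is what guarantees the intermediate surfaces are genuinely abelian.
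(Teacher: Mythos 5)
Your proposal follows essentially the same route as the paper: quotient $\overline X_{40}$ by the explicit transposition, identify $Q$ as a $15$-nodal quartic K3 carrying two $(-2)$-curves $A_{16},A_{16}'$ that meet transversally at the images of the $10$ $\tau$-fixed nodes, invoke Nikulin's theorem on $16$ disjoint $(-2)$-curves for the $2$-divisibility and the Kummer structure of the intermediate quotients, and assemble the bidouble cover. The facts you explicitly defer (that $\Fix(\tau)\cap\overline X_{40}$ splits into two rational curves through exactly $10$ nodes, that $\tau$ pairs the other $30$ nodes, and that the $10$ fixed nodes map to smooth points of $Q$ with a transverse crossing of the branch curve) are precisely the ones the paper settles by explicit Magma computation, so your plan is sound and your added conceptual touches (the $\det=-1$ twist giving $p_g(Q)=1$, the Riemann--Hurwitz genus count) are correct.
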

\begin{proof}
Let $\tau\in\Sigma_{5}$ be a transposition (for example the one exchanging
the coordinates $x$ and $y$). Using Magma, we compute that the
fixed point set of $\tau$ is a union of two smooth genus 0 curves
meeting at $10$ points which are $10$ nodes of $\overline{X}_{40}$.
Moreover the quotient of the surface $\overline{X}_{40}$ by $\tau$
 is a quartic $K3$ surface $Q\hookrightarrow\mathbb P^{3}$ which has 
$15$ nodes (see \cite[Section D]{Ri}). \\ 
The image of the fixed point set of $\tau$ by the quotient map is
$A_{16}+A_{16}',$ where $A_{16}$ and $A_{16}'$ are two $(-2)$-curves
which are disjoint from the $15$ nodes and that meet transversally at $10$ points.
It is the intersection of $Q$ with a quadric in $\mathbb P^3.$
\\
Let $A_{1},\dots,A_{15}$ be the $15$ $(-2)$-curves above the nodes
on the minimal resolution $\hat{Q}$ of $Q$. Let us keep the same
notations for the strict transform of $A_{16},A_{16}'$ on $\hat{Q}$.
The $16$ curves $A_{1},\dots,A_{15},A_{16}$ are disjoint and so
are the $16$ curves $A_{1},\dots,A_{15},A_{16}'$. Thus by \cite[Theorem 1]{Ni1},
the divisors $A_{16}+\sum_{i=1}^{15}A_{i}$ and $A_{16}'+\sum_{i=1}^{15}A_{i}$
are $2$-divisible. Using the three divisors $D_{1},D_{2},D_{3}$,
the associated bidouble cover $\hat{S}\to\hat{Q}$ gives the blow-up
of $\overline S$ at the $40$ fixed points of $\sigma$ (see \cite{Pardini_abelian} or \cite{Catanese_bidouble} for information on bidouble covers);
the remaining assertions follow.
\end{proof}

\begin{remark}\label{RemSubfamily}
More generally, one can prove that there exists a $3$-dimensional
family of quartic $K3$ surfaces with $15$ nodes, containing on their
smooth locus two $(-2)$-curves $A_{16},A_{16}'$ such that $A_{16}A_{16}'=10$ (cf. \cite{Re, Pi}).
Their associated bidouble covers as above give a $3$-dimensional subfamily of Schoen surfaces.\\
 It is interesting to compare this construction of Schoen surfaces
by bidouble covers with the construction of Lagrangian surfaces done
by Bogomolov and Tschinkel in \cite[Sections $3\ \&\ 4$]{BT}.
\end{remark}

\subsection{The $240$ automorphisms of $\overline{S}$}

We will use standard results in representation theory for which we refer the reader to \cite{Fulton_Harris}. 
The permutation group $\Sigma_{5}$ has $7$ irreducible representations
(up to isomorphism), which we denote by 
\[
U,\ U',\ V,\ V'=V\otimes U',\ W,\ W'=W\otimes U',\ \wedge^{2}V,
\]
of respective dimension $1,1,4,4,5,5,6$, where $U$ is the trivial representation, $U'$ is the signature,
the $4$-dimensional representation $V$ satisfies ${\rm Tr}(\tau)=2$ and
the $5$-dimensional representation $W$ is determined by ${\rm Tr}(\tau)=1$
({\rm Tr} is the trace and $\tau\in\Sigma_{5}$ is a transposition). 

One has $K_{\overline{X}_{40}}=\mathcal{O}(1)$. By looking at
the symmetries of the equations of $\overline{X}_{40},$ the representation of $\Sigma_{5}$
on $H^{0}\left(\overline{X}_{40},K_{\overline{X}_{40}}\right)$ is faithful. On $\mathbb{P}^4$, the point $(1:1:1:1:1)$ is invariant, thus the corresponding vector space is stable and the representation is thus not irreducible. The only non-irreducible 5-dimensional faithful representations are:

\[
U+V,\ U+V',\ U'+V,\ U'+V'.
\]
Let ${\rm aut}(\overline{S})^{o}$
be the subgroup of ${\rm aut}(\overline{S})$ generated by the lifts of
the elements of $\Sigma_{5}\subset{\rm aut}(\overline{X}_{40})$. There
is a natural exact sequence 
\[
0\to\mathbb Z/2\mathbb Z\to{\rm aut}(\overline{S})^{o}\to\Sigma_{5}\to0
\]
where the morphism $\mathbb Z/2\mathbb Z\to{\rm aut}(\overline{S})^{o}$ is obtained
by the inclusion of $\sigma$. By Schur theory, the group extensions
\[
0\to\mathbb Z/2\mathbb Z\to H\to\Sigma_{5}\to0
\]
of $\Sigma_{5}$ by $\mathbb Z/2\mathbb Z$ are classified by the second homology
group 
\[
H^{2}(\Sigma_{5},\mathbb Z/2\mathbb Z),
\]
which is isomorphic to $(\mathbb Z/2\mathbb Z)^{2}$, therefore ${\rm aut}(\overline{S})^{o}$
is one of the following groups 
\[
\mathbb Z/2\mathbb Z\times\Sigma_{5},\ 2.\Sigma_{5}^{-},\ 2.\Sigma_{5}^{+},\ 4.A_{5},
\]
which we will describe later.
\begin{theorem}
\label{Thm:The-lifting-of}The group ${\rm aut}(\overline{S})^{o}$ is
$2.\Sigma_{5}^{+}$.
\end{theorem}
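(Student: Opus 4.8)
\emph{Plan.} I would fix the extension class of
\[
0\to\mathbb Z/2\mathbb Z\to{\rm aut}(\overline S)^{o}\to\Sigma_{5}\to0
\]
by letting $G:={\rm aut}(\overline S)^{o}$ act on the four--dimensional space $\mathcal H:=H^{0}(\overline S,\Omega_{\overline S})$ of holomorphic $1$--forms and reading off representation--theoretic constraints, to be combined with Proposition \ref{prop:action transposition}. The four candidate groups are separated by two binary invariants: whether a transposition lifts to an element of order $2$ or of order $4$ (equivalently, whether the part of the class detected by restriction to a transposition subgroup vanishes), and whether the restriction of the extension to $A_{5}$ is split or is the Schur cover $2.A_{5}$. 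The first I expect to read off from Proposition \ref{prop:action transposition}, the second from the Lagrangian (Schoen) property of $\overline S$. First I would check that $\sigma$ acts as $-\mathrm{id}$ on $\mathcal H$: the $\sigma$--invariant $1$--forms descend to $H^{0}(\hat X_{40},\Omega_{\hat X_{40}})$, and $\hat X_{40}$ is regular ($q=0$, being the resolution of a nodal complete intersection in $\mathbb P^{4}$), so $\mathcal H^{\sigma}=0$. Thus $\mathcal H$ is a genuine $4$--dimensional representation of $G$, on which the central $\langle\sigma\rangle$ acts by $-1$. Dually, the canonical map of $\overline S$ factors through $\pi:\overline S\to\overline X_{40}$, so every canonical form is a pullback and $\sigma$ acts trivially on $H^{0}(\overline S,K_{\overline S})$; this is the $5$--dimensional representation already identified as one of $U+V,\,U+V',\,U'+V,\,U'+V'$.

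\emph{Step 2 (using Proposition \ref{prop:action transposition}).} Since $\langle\tilde\tau,\sigma\rangle\cong(\mathbb Z/2\mathbb Z)^{2}$, a transposition $\tau$ lifts to an involution, so the restriction of the extension to $\langle\tau\rangle$ splits. This excludes the two groups in which transpositions lift to order $4$, namely $2.\Sigma_{5}^{-}$ and $4.A_{5}$, leaving $\mathbb Z/2\mathbb Z\times\Sigma_{5}$ and $2.\Sigma_{5}^{+}$.

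\emph{Step 3 (using the Lagrangian property).} Because $\overline S$ is a Schoen surface, the natural map $\wedge^{2}\mathcal H\to H^{0}(\overline S,K_{\overline S})$ has a one--dimensional kernel. It is $G$--equivariant; as $\sigma$ acts by $+1$ on both sides (trivially on the target, and by $(-1)^{2}=1$ on $\wedge^{2}\mathcal H$), it descends to a $\Sigma_{5}$--equivariant map, and a one--dimensional kernel forces it to be surjective from a $6$-- onto a $5$--dimensional space. Hence $\wedge^{2}\mathcal H$ contains, as a subrepresentation, the $4$--dimensional irreducible summand ($V$ or $V'$) of the target. But if $G\cong\mathbb Z/2\mathbb Z\times\Sigma_{5}$, then as a $\Sigma_{5}$--representation $\mathcal H$ is one of $V,\,V'$, or a sum of the $1$--dimensional representations $U,U'$ (these exhaust the $4$--dimensional representations, since $\Sigma_{5}$ has no irreducible of dimension $2$ or $3$); in each case $\wedge^{2}\mathcal H$ is either the irreducible $\wedge^{2}V$ (note $\wedge^{2}V'\cong\wedge^{2}V$) or a sum of $1$--dimensionals, so it has no $4$--dimensional constituent --- a contradiction. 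Therefore $G\not\cong\mathbb Z/2\mathbb Z\times\Sigma_{5}$, and with Step 2 we conclude $G\cong2.\Sigma_{5}^{+}$.

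\emph{Main obstacle.} The conceptual content is routine once the pieces are in place; the delicate points are the invariant bookkeeping of Step 1 (that $\sigma$ is $-1$ on $1$--forms and $+1$ on $2$--forms) and, above all, matching the intrinsic invariants to the names of the four extensions in Step 2 --- one must use the Schur--theoretic classification $H^{2}(\Sigma_{5},\mathbb Z/2\mathbb Z)\cong(\mathbb Z/2\mathbb Z)^{2}$ carefully to be sure that ``transpositions lift to order $2$'' is exactly what singles out $\mathbb Z/2\mathbb Z\times\Sigma_{5}$ and $2.\Sigma_{5}^{+}$. A purely computational alternative, bypassing Step 3, is to lift a double transposition explicitly on the double cover and verify that its square is $\sigma$ (so that it has order $4$), which by the restriction to $A_{5}$ again forces the nonsplit cover $2.\Sigma_{5}^{+}$.
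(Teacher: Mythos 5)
Your proposal is correct, and it follows the paper's overall strategy: identify the four possible central extensions via $H^{2}(\Sigma_{5},\mathbb Z/2\mathbb Z)\cong(\mathbb Z/2\mathbb Z)^{2}$, show $\sigma$ acts by $-1$ on $1$-forms, and exploit the equivariance and $1$-dimensional kernel of $\wedge^{2}H^{0}(\Omega_{\overline S})\to H^{0}(K_{\overline S})$. Your exclusions of $\mathbb Z/2\mathbb Z\times\Sigma_{5}$ and of $2.\Sigma_{5}^{-}$ coincide with the paper's (the paper phrases the latter as ``$2.\Sigma_{5}^{-}$ has a unique involution,'' which is equivalent to your ``transpositions lift to order $4$ there''). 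The genuine difference is the exclusion of $4.A_{5}$: the paper works with the character table of that group, checks that the trace condition on $\sigma$ rules out two of its $4$-dimensional irreducibles and that the other two have irreducible $\wedge^{2}$, whereas you dispose of it purely group-theoretically by observing that transpositions lift to order $4$ in $4.A_{5}$ as well, so Proposition \ref{prop:action transposition} kills it together with $2.\Sigma_{5}^{-}$ in one stroke. Your route is cleaner and avoids the character table of $4.A_{5}$ entirely; its one load-bearing assertion --- that the two classes in $H^{2}(\Sigma_{5},\mathbb Z/2\mathbb Z)$ restricting nontrivially to a transposition subgroup are exactly those of $2.\Sigma_{5}^{-}$ and $4.A_{5}$ --- is true (e.g.\ in $4.A_{5}$ the quotient by the split normal $A_{5}$ is cyclic of order $4$, forcing any lift of a transposition to square to $\sigma$; or note that the restriction map is additive and the class of $4.A_{5}$ is the sum of those of $2.\Sigma_{5}^{\pm}$), but you should spell out one such verification rather than leave it as bookkeeping, since it is the only point where your argument could silently go wrong.
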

We prove this result by showing that ${\rm aut}(\overline{S})^{o}$ cannot
be $\mathbb Z/2\mathbb Z\times\Sigma_{5},\,2.\Sigma_{5}^{-}$ and $4.A_{5}$.
We need the following Lemma.
\begin{lemma}
The trace of the involution $\sigma$ on $H^{0}\left(\overline{S},\Omega_{\overline{S}}\right)$ is $-4$.
\end{lemma}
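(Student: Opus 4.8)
The plan is to compute the trace of $\sigma$ on $H^0(\overline{S},\Omega_{\overline{S}})$ by relating the holomorphic $1$-forms of $\overline{S}$ to geometric data we already control, namely the double cover structure $\pi:\overline{S}\to\overline{X}_{40}$ and the $\sigma$-eigenspace decomposition of $H^0(\overline{S},\Omega_{\overline{S}})$. Since $\sigma$ is the deck involution of the double cover branched over the $40$ nodes, the pushforward decomposes $\pi_*\Omega_{\overline{S}}$ (or rather the relevant sheaves after passing to the resolution $\widehat{S}\to\widehat{X}_{40}$) into a $\sigma$-invariant part and a $\sigma$-anti-invariant part. The invariant part of $H^0(\overline{S},\Omega_{\overline{S}})$ is $H^0(\widehat{X}_{40},\Omega_{\widehat{X}_{40}})=H^0(X,\Omega_X)$, which is $0$ because the canonical image $\widehat{X}_{40}$ is a rational resolution of a $K3$-like / general type surface with $q=0$; more precisely, $q(X_{40})=0$ as it is a complete intersection type surface. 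Thus all of $H^0(\overline{S},\Omega_{\overline{S}})$ lies in the $(-1)$-eigenspace of $\sigma$.

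First I would make this dimension count precise. We know $q(\overline{S})=4$, so $h^0(\overline{S},\Omega_{\overline{S}})=4$. The key observation is that $\sigma^* $ acts on each holomorphic $1$-form, and the trace is $(\dim\text{invariant})-(\dim\text{anti-invariant})$. If I can show the invariant subspace is $0$-dimensional and hence the anti-invariant subspace is all $4$ dimensions, the trace is immediately $0-4=-4$, which is exactly the claim. So the crux reduces to showing $H^0(\overline{S},\Omega_{\overline{S}})^{\sigma}=0$, equivalently that $\sigma$ acts as $-\mathrm{id}$ on the space of holomorphic $1$-forms.

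To establish $H^0(\overline{S},\Omega_{\overline{S}})^{\sigma}=0$ I would use that the $\sigma$-invariant $1$-forms descend to holomorphic $1$-forms on the quotient $\overline{X}_{40}$ (pulled back from its resolution $\widehat{X}_{40}$). Since $\widehat{X}_{40}$ is the minimal resolution of a $40$-nodal surface whose smooth model is of general type with $q=0$ — indeed $\overline{X}_{40}$ is a complete intersection in $\mathbb{P}^4$, so by Lefschetz-type vanishing $h^1(\mathcal{O})=0$ and the nodal resolution does not introduce irregularity — we get $h^0(\widehat{X}_{40},\Omega)=q(\widehat{X}_{40})=0$. Therefore the invariant part vanishes, forcing $\sigma$ to act by $-1$ on all of $H^0(\overline{S},\Omega_{\overline{S}})$.

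The main obstacle is the careful bookkeeping at the singularities: one must pass from $\overline{S}$ and $\overline{X}_{40}$ to their smooth (minimal) models $\widehat{S}$ and $\widehat{X}_{40}$ and verify that the identification $H^0(\overline{S},\Omega_{\overline{S}})^{\sigma}\cong H^0(\widehat{X}_{40},\Omega_{\widehat{X}_{40}})$ is valid, i.e. that holomorphic $1$-forms extend across the exceptional $(-2)$-curves and that the branch divisor (supported on the nodes, hence of dimension $0$ upstairs) contributes nothing to the invariant $1$-forms. This is standard for double covers branched in codimension $\geq 2$ after resolution, since the $A_i$ are rational curves carrying no $1$-forms, but it requires noting that the irregularity is a birational invariant and that $q(\widehat{X}_{40})=0$. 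Once that vanishing is in hand, the trace computation is immediate.
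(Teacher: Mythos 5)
Your proposal is correct and follows essentially the same route as the paper: the paper likewise observes that the $\sigma$-invariant $1$-forms on $\overline{S}$ correspond to $1$-forms on the minimal resolution $\hat{X}_{40}$ of the quotient (citing Beauville's Lemma VI.11), which is regular, so $\sigma$ acts as $-\mathrm{id}$ on the $4$-dimensional space $H^{0}(\overline{S},\Omega_{\overline{S}})$ and the trace is $-4$. Your added justification that $q(\hat{X}_{40})=0$ because $\overline{X}_{40}$ is a nodal complete intersection in $\mathbb{P}^{4}$ is a valid way to supply the regularity the paper asserts.
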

\begin{proof}
The minimal resolution $\hat X_{40}$ of the quotient surface $\overline S=\overline X_{40}/{\sigma}$ is regular. By \cite[Lemma VI.11 and Example VI.12, 3)]{BeauvilleBook}, the space of $\sigma$-invariant $1$-forms on $\overline{S}$ and the space of $1$-forms on $\hat{X}_{40}$ have the same dimension. Therefore $\sigma$ acts on $H^{0}\left(\overline{S},\Omega_{\overline{S}}\right)$ by multiplication by $-1$, thus the result.
\end{proof}
We remark moreover that the morphism 
\[
\varphi_{2,0}:\wedge^{2}H^{0}\left(\overline{S},\Omega_{\overline{S}}\right)\longrightarrow H^{0}\left(\overline{S},K_{\overline{S}}\right)\simeq H^{0}\left(\overline{X}_{40},K_{\overline{X}_{40}}\right)
\]
is equivariant under ${\rm aut}(\overline S)^{o}$ and we know that it has a $1$-dimensional kernel (since it is a Schoen surface).
The group ${\rm aut}(\overline{S})^{o}$ acts on $H^{0}\left(\overline{S},K_{\overline{S}}\right)=H^{0}\left(\overline{X}_{40},K_{\overline{X}_{40}}\right)$
through ${\rm aut}(\overline{S})^{o}/\sigma=\Sigma_{5}$.\\

\noindent{\bf Proof of Theorem \ref{Thm:The-lifting-of}.}\newline
Suppose that ${\rm aut}(\overline{S})^{o}=\mathbb Z/2\mathbb Z\times\Sigma_{5}$.
If the $4$-dimensional representation $H^{0}\left(\overline{S},\Omega_{\overline{S}}\right)$
of $\Sigma_{5}$ is faithful, then it is $V$ or $V'$ and 
\[
\wedge^{2}H^{0}(\overline S,\Omega_{\overline S})=\wedge^{2}V=\wedge^{2}V'
\]
is an irreducible ($6$-dimensional) representation, a contradiction.
Therefore $$H^{0}\left(\overline{S},\Omega_{\overline{S}}\right)=U^{a}+U'^{b},$$
but then the representation of $\Sigma_{5}={\rm aut}(\overline{S})^{o}/\sigma$
on $\wedge^{2}H^{0}\left(\overline{S},\Omega_{\overline{S}}\right)$ is not faithful,
again a contradiction.

The group $2.\Sigma_{5}^{-}$ is the group number $89$ among the
order $240$ groups in Magma database. It contains an unique involution.
But by Proposition \ref{prop:action transposition}, the automorphisms
of $\overline{S}$ lifting the transpositions of $\Sigma_{5}$ acting
on $\overline{X}_{40}$ are involutions, thus ${\rm aut}(\overline{S})^{o}$
cannot be $2.\Sigma_{5}^{-}$.

The group $A_{5}.4$ (group number $91$ in Magma database) has $14$
irreducible representations $\chi_{i},\,i=1,\dots,14$ of respective
dimensions
\[
1^{4}, 4^{4}, 5^{4}, 6^{2},
\]
(where $a^{b}$ means $a$ repeated $b$ times).

Let $W_4$ be a non irreducible $4$-dimensional representation of $A_5 .4$. It is easy to see that $\wedge^2 W_4$ is not a faithful representation of $\Sigma_5 ={\rm aut}(\overline{S})^{o}/\sigma$, thus $\wedge^{2}H^{0}(\overline S,\Omega_{\overline S})$ cannot be such representation $W_4$.

Looking at the character table (for instance given by Magma),
the representation $H^{0}\left(\overline{S},\Omega_{\overline{S}}\right)$ cannot
be $\chi_{5}$ or $\chi_{6}$ since the trace of the involution $\sigma$
must be $-4$. The two remaining $4$-dimensional representations
$\chi_{7},\chi_{8}$ satisfy 
\[
\wedge^{2}\chi_{7}=\wedge^{2}\chi_{8}=\chi_{14},
\]
(for the computation of the wedge product of a representation see \cite{Fulton_Harris}) which is an  irreducible representation of $\Sigma_{5}={\rm aut}(\overline{S})^{o}/\sigma$,
thus $A_{5}.4$ is not ${\rm aut}(\overline{S})^{o}$. The only possibility
is thus ${\rm aut}(\overline{S})^{o}=2.\Sigma_{5}^{+}$.
\Qed

\subsection{The group $2.\Sigma_{5}^{+}$ and its action on $\overline{S}$ }

The group $2.\Sigma_{5}^{+}$ (number $90$ among groups of order
$240$ in Magma database) has $12$ irreducible representations $\chi_{1},\dots,\chi_{12}$,
of respective dimensions 
\[
1^{2}, 4^{5}, 5^{2}, 6^{3}.
\]
It has $21$ involutions, divided into two conjugacy classes, one
containing an unique element $\sigma$, which is the involution of
the double cover $\overline{S}\to\overline{X}_{40}$. Since the trace
of $\sigma$ on $\chi_{3}$ and $\chi_{5}$ is not $-4$, the only
possibilities are $H^{0}\left(\overline{S},\Omega_{\overline{S}}\right)=\chi_{4},\,\chi_{6}$
or $\chi_{7}$. One has
\[
\wedge^{2}\chi_{4}=\chi_{1}+\chi_{2}+\chi_{3}\mbox{ \ \ and \ \ }\wedge^{2}\chi_{6}=\wedge^{2}\chi_{7}=\chi_{2}+\chi_{9}.
\]
The representation of $2.\Sigma_{5}^{+}$ on $\chi_{9}$ gives an
irreducible $5$-dimensional representation of $2.\Sigma_{5}^{+}/\sigma=\Sigma_{5}$,
which is impossible since $H^{0}\left(\overline{X}_{40},K_{\overline{X}_{40}}\right)$
is not irreducible. We thus proved that $H^{0}(\overline{S},\Omega_{\overline{S}})=\chi_{4}$,
which has character
\[
\begin{array}{ccccccccccccc}
Order & 1 & 2 & 2 & 3 & 4 & 5 & 6 & 6 & 6 & 8 & 8 & 10\\
Trace & 4 & -4 & 0 & -2 & 0 & -1 & 0 & 0 & 2 & 0 & 0 & 1
\end{array}
\]
We conclude that:
\begin{proposition}
The representation of the group $2.\Sigma_{5}^{+}$ on $H^{0}\left(\overline{S},\Omega_{\overline{S}}\right)$
is $\chi_{4}$. Moreover, one has $\wedge^{2}H^{0}\left(\overline{S},\Omega_{\overline{S}}\right)=\chi_{1} + \chi_{2} + \chi_{3}$
and 
\[
H^{1,1}(A)=\chi_{4}\otimes\chi_{4}=\chi_{1}+\chi_{2}+\chi_{3}+\chi_{5}+\chi_{10},
\]
where $A$ is the Albanese variety of $\overline{S}$.
\end{proposition}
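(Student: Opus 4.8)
The plan is to pin down the representation $H^{0}(\overline{S},\Omega_{\overline{S}})$ among the three surviving candidates $\chi_{4},\chi_{6},\chi_{7}$ by exploiting the equivariant map $\varphi_{2,0}$ together with the facts already established: the trace of $\sigma$ must be $-4$, and the kernel of $\varphi_{2,0}$ is one-dimensional because $\overline{S}$ is a Schoen surface. First I would record that the wedge products of the two six-dimensional candidates both contain the irreducible five-dimensional factor $\chi_{9}$, as stated. Since $\overline{S}$ is a Schoen surface, $\wedge^{2}H^{0}(\overline{S},\Omega_{\overline{S}})$ maps onto $H^{0}(\overline{X}_{40},K_{\overline{X}_{40}})$ with one-dimensional kernel; as this map is $2.\Sigma_{5}^{+}$-equivariant and factors through $\Sigma_{5}=2.\Sigma_{5}^{+}/\sigma$, its image is a genuine five-dimensional representation of $\Sigma_{5}$, namely the non-irreducible canonical representation $U+V$ (or one of the listed non-irreducible faithful ones). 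The appearance of $\chi_{9}$, which descends to an \emph{irreducible} five-dimensional representation of $\Sigma_{5}$, is therefore incompatible with the target being non-irreducible, eliminating $\chi_{6}$ and $\chi_{7}$.

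Having excluded the six-dimensional candidates, the only possibility is $H^{0}(\overline{S},\Omega_{\overline{S}})=\chi_{4}$, and I would then read off its character from the table as displayed, confirming in particular that the trace of $\sigma$ (the order-$2$ entry in the class of the central involution) equals $-4$, as required by the Lemma. The second assertion, $\wedge^{2}\chi_{4}=\chi_{1}+\chi_{2}+\chi_{3}$, is a direct character computation: compute the values of the character of $\wedge^{2}\chi_{4}$ from the formula $\chi_{\wedge^{2}\rho}(g)=\tfrac{1}{2}\bigl(\chi_{\rho}(g)^{2}-\chi_{\rho}(g^{2})\bigr)$ on each conjugacy class, then decompose against the character table of $2.\Sigma_{5}^{+}$ using orthogonality. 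This is routine and can be delegated to Magma as indicated.

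For the final formula, the Hodge structure on $H^{1,1}(A)$ of the Albanese variety: since $A$ is an abelian variety with $H^{1,0}(A)\cong H^{0}(\overline{S},\Omega_{\overline{S}})=\chi_{4}$, one has $H^{1,1}(A)=H^{1,0}(A)\otimes \overline{H^{1,0}(A)}$, and because $\chi_{4}$ is a real (self-dual) representation of the finite group acting, this is simply $\chi_{4}\otimes\chi_{4}$. I would then decompose the tensor square $\chi_{4}\otimes\chi_{4}=\operatorname{Sym}^{2}\chi_{4}\oplus\wedge^{2}\chi_{4}$ by the analogous character identity $\chi_{\rho\otimes\rho}(g)=\chi_{\rho}(g)^{2}$ and orthogonality, obtaining $\chi_{1}+\chi_{2}+\chi_{3}+\chi_{5}+\chi_{10}$.

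The main obstacle is the first step: one must be careful that the elimination of $\chi_{6},\chi_{7}$ genuinely uses the Schoen property rather than merely the dimension count. The subtlety is that $\wedge^{2}\chi_{6}$ and $\wedge^{2}\chi_{7}$ are six-dimensional as representations of $2.\Sigma_{5}^{+}$, so the kernel of $\varphi_{2,0}$ is a one-dimensional subrepresentation and the image is five-dimensional; the argument requires knowing that $\chi_{9}$ descends to an irreducible representation of $\Sigma_{5}$ while the canonical target $H^{0}(\overline{X}_{40},K_{\overline{X}_{40}})$ is reducible. Verifying that the one-dimensional kernel is precisely the $\chi_{2}$ summand (so that the surviving $\chi_{9}$ would have to be the image) and that $\chi_{9}$ is not a valid target is where the representation-theoretic bookkeeping must be done carefully.
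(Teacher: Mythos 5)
Your argument is correct and is essentially the proof the paper gives: eliminate $\chi_{3},\chi_{5}$ by the trace of $\sigma$; eliminate $\chi_{6},\chi_{7}$ because $\wedge^{2}\chi_{6}=\wedge^{2}\chi_{7}=\chi_{2}+\chi_{9}$ would force the image of the equivariant map $\varphi_{2,0}$ (whose kernel is the unique one-dimensional subrepresentation $\chi_{2}$) to be the irreducible $\chi_{9}$, contradicting the reducibility of $H^{0}\left(\overline{X}_{40},K_{\overline{X}_{40}}\right)$; then obtain the remaining identities by routine character computations. The only blemish is the phrase ``six-dimensional candidates'': $\chi_{6}$ and $\chi_{7}$ are $4$-dimensional (as they must be, since $q(\overline{S})=4$) and it is their exterior squares that are $6$-dimensional, as you correctly use later.
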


The group $\Sigma_{5}={\rm aut}(\overline{S})^{o}/\sigma$ acts on $\wedge^{2}\chi_{4}$
and $\wedge^{2}\chi_{4}=U + U'+ V$.
\begin{proposition}
The representation of $2.\Sigma_{5}^{+}$ on $H^{0}\left(\overline{S},K_{\overline{S}}\right)$
is $\chi_{2}+\chi_{3}$.\end{proposition}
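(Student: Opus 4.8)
The plan is to read off $H^0(\overline S,K_{\overline S})$ as the image of the equivariant Schoen map
\[
\varphi_{2,0}\colon\ \wedge^2 H^0(\overline S,\Omega_{\overline S})=\chi_1+\chi_2+\chi_3\ \longrightarrow\ H^0(\overline S,K_{\overline S}),
\]
whose source decomposition was obtained above, with $\chi_1,\chi_2$ the two one-dimensional representations and $\chi_3$ four-dimensional. Since $\varphi_{2,0}$ is $2.\Sigma_5^+$-equivariant with one-dimensional kernel, that kernel is a one-dimensional subrepresentation, hence $\chi_1$ or $\chi_2$; the map is then onto the five-dimensional target, so $H^0(\overline S,K_{\overline S})$ equals $\chi_2+\chi_3$ or $\chi_1+\chi_3$ according to the case. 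The whole argument thus reduces to deciding which one-dimensional summand lies in the kernel.

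First I would pin down the relevant traces. Let $\tilde\tau$ be a lift of a transposition $\tau\in\Sigma_5$; by Proposition \ref{prop:action transposition} it is an involution, so $\tilde\tau^2=1$, and from the character table of $\chi_4=H^0(\overline S,\Omega_{\overline S})$ one reads $\chi_4(\tilde\tau)=0$ (the non-central order-two class). Hence
\[
(\wedge^2\chi_4)(\tilde\tau)=\tfrac12\!\left(\chi_4(\tilde\tau)^2-\chi_4(\tilde\tau^2)\right)=\tfrac12(0-4)=-2,
\]
and since $\chi_1(\tilde\tau)+\chi_2(\tilde\tau)=1-1=0$ this gives $\chi_3(\tilde\tau)=-2$. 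The two candidate traces on $H^0(\overline S,K_{\overline S})$ are therefore $\chi_2(\tilde\tau)+\chi_3(\tilde\tau)=-3$ and $\chi_1(\tilde\tau)+\chi_3(\tilde\tau)=-1$.

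Next I would compute that trace geometrically. Using $H^0(\overline S,K_{\overline S})\simeq H^0(\overline X_{40},K_{\overline X_{40}})$ and the Poincar\'e residue for the complete intersection $\overline X_{40}=\{F_2=F_4=0\}\subset\mathbb P^4$, a basis of canonical forms is given by $\operatorname{Res}\big(L\,\Omega_0/(F_2F_4)\big)$ with $L$ a linear form and $\Omega_0$ the contraction of $dx\wedge dy\wedge dz\wedge dw\wedge dt$ with the Euler field. As $F_2$ and $F_4$ are $\Sigma_5$-invariant, for $g\in\Sigma_5$ one has $g^*\operatorname{Res}\big(L\Omega_0/(F_2F_4)\big)=\det(g)\,\operatorname{Res}\big((g\cdot L)\Omega_0/(F_2F_4)\big)$, so $H^0(\overline X_{40},K_{\overline X_{40}})$ is the space of linear forms twisted by the sign character. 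A transposition fixes three of the five coordinates and has determinant $-1$, whence its trace on $H^0(\overline X_{40},K_{\overline X_{40}})$ is $(-1)\cdot 3=-3$.

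Comparing, the value $-3$ selects $\chi_2+\chi_3$ and rules out $\chi_1+\chi_3$; thus the kernel of $\varphi_{2,0}$ is the trivial representation $\chi_1$ and $H^0(\overline S,K_{\overline S})=\chi_2+\chi_3$. The one delicate point—and the only place the argument can go wrong by a sign—is the equivariant bookkeeping in the residue computation: the determinant twist coming from $\Omega_0$ must be kept, since dropping it would give trace $+3$ and the false conclusion $\chi_1+\chi_3$. As an independent check, the same twist yields $H^0(\overline X_{40},K_{\overline X_{40}})\cong(U+V)\otimes U'=U'+V'$, consistent with $\chi_2=U'$ and $\chi_3=V'$.
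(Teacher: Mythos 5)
Your proof is correct, and it settles the decisive point by a different mechanism than the paper. Both arguments share the same setup: $\wedge^{2}H^{0}(\overline S,\Omega_{\overline S})=\chi_{1}+\chi_{2}+\chi_{3}$, the kernel of the equivariant map $\varphi_{2,0}$ is one-dimensional and hence equals $\chi_{1}$ or $\chi_{2}$, and the two candidates for $H^{0}(\overline S,K_{\overline S})$ are distinguished by the trace of a lifted transposition $\iota$, namely $-3$ versus $-1$. The paper decides between them geometrically: the eigenvalues of $\iota$ on $H^{0}(\Omega_{\overline S})$ and on $H^{0}(K_{\overline S})$ compute $q$ and $p_{g}$ of the quotient $\overline S/\iota$, and Proposition \ref{prop:action transposition} identifies that quotient as birationally an abelian surface, forcing $p_{g}=1$, i.e. a single $+1$ eigenvalue and trace $-3$. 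You instead compute the character of $\Sigma_{5}$ on $H^{0}(\overline X_{40},K_{\overline X_{40}})$ directly from the adjunction/Poincar\'e-residue description of canonical forms on the complete intersection, obtaining $(\text{linear forms})\otimes\det=U'+V'$ and hence trace $-3$, without invoking the bidouble-cover structure; this makes your argument more self-contained, at the cost of the sign bookkeeping in the residue formula, which you handle correctly (the quadric and quartic are genuinely $\Sigma_{5}$-invariant, so the only twist is $\det$). Incidentally, your identification $H^{0}(K_{\overline S})=U'+V'$ is the internally consistent one: the paper's proof quotes the traces $-1$ and $-3$ but labels the corresponding representations $U+V$ and $U'+V$, whose transposition traces are $3$ and $1$ under the paper's own convention $\mathrm{Tr}_{V}(\tau)=2$; the intended labels are $U+V'$ and $U'+V'$, and the final conclusion $\chi_{2}+\chi_{3}$ is unaffected.
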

\begin{proof}
The trace of an involution $\iota\neq\sigma$ in $2.\Sigma_{5}^{+}$
acting on $\chi_{4}$ equals $0$, thus the eigenvalues of $\iota$ on the space of holomorphic one forms are $1,1,-1,-1$. 
Moreover, since $\wedge^{2}\chi_{4}=\chi_{1}+\chi_{2}+\chi_{3}$,
the involution $\iota$ acts on $H^{0}\left(\overline{S},K_{\overline{S}}\right)$
with trace $-1$ or $-3$ according if 
\[
H^{0}\left(\overline{S},K_{\overline{S}}\right)=U+V\mbox{ \ \ or \ \ }H^{0}\left(\overline{S},K_{\overline{S}}\right)=U'+V.
\]
Then the eigenvalues of $\iota$ on $H^{0}\left(\overline{S},K_{\overline{S}}\right)$ are respectively $1,1,-1,-1,-1$ and $1,-1,-1,-1,-1$.
By \cite[Lemma VI.11 and Example VI.12, 3)]{BeauvilleBook}, the quotient surface has invariants $q=2$ and $p_{g}=2$ or $p_{g}=1$ respectively.
By Proposition
\ref{prop:action transposition}, that quotient surface is (birational to) an Abelian surface and it is the second case that is
actually occurring, thus $H^{0}\left(\overline{S},K_{\overline{S}}\right)=U'+V,$
which corresponds to the representation $\chi_{2}+\chi_{3}$ for $2.\Sigma_{5}^{+}$.
\end{proof}

There is a basis  $\omega_{1},\dots,\omega_{4}$ of $H^{0}\left(\overline{S},\Omega_{\overline{S}}\right)$
such that the action of $2.\Sigma_{5}^{+}$ is generated by the following
matrices of order $2$ and $8$:
\begin{equation}
\left(\begin{array}{cccc}
0 & 1 & 0 & 0\\
1 & 0 & 0 & 0\\
0 & 0 & 0 & 1\\
0 & 0 & 1 & 0
\end{array}\right),\,\left(\begin{array}{cccc}
\frac{\sqrt{2}}{2}(1+I) & 0 & -\frac{\sqrt{2}}{2}(1+I) & -I\\
0 & 0 & -1 & 0\\
0 & 1 & -\sqrt{2} & -1\\
0 & 0 & 0 & \frac{\sqrt{2}}{2}(1-I)
\end{array}\right),\label{eq:matrices}
\end{equation}
where $I^2=-1$.\newline
We have 
\[
\wedge^{2}H^{0}(\overline S, \Omega_{\overline S})=\chi_{1}+\chi_{2}+\chi_{3},
\]
where  the trivial representation $\chi_{1}$ is generated by the
indecomposable vector
\[
v=\omega_{1}\wedge\omega_{4}+\omega_{2}\wedge\omega_{3}
\]
which generates the kernel of $\wedge^{2}H^{0}\left(\overline S, \Omega_{\overline{S}})\to H^{0}(\overline{S},K_{\overline{S}}\right)$.
By the theorem of Castelnuovo-de Franchis, that gives another proof
that $\overline{S}$ has no fibration onto a curve of genus $\geq2$.

\subsection{The periods of the Albanese variety of $\overline S$}\label{periods}

Let us study the Albanese variety of $\overline{S}.$
\begin{proposition}
The Albanese variety $A$ of $\overline{S}$ is isogenous to $E^{4}$
where $E$ is an elliptic curve with CM by $\mathbb Z[\sqrt{-15}]$.\end{proposition}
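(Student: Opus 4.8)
The plan is to determine the Albanese variety $A$ of $\overline S$ by exploiting the faithful action of the group $G:=2.\Sigma_5^+$ on $H^0(\overline S,\Omega_{\overline S})=\chi_4$, since $A=H^0(\overline S,\Omega_{\overline S})^*/H_1(\overline S,\mathbb Z)$ inherits a $G$-action. First I would observe that the explicit generating matrices in \eqref{eq:matrices} already encode the analytic representation of $G$ on the tangent space of $A$; these matrices are written over the field $\mathbb Q(I,\sqrt2)$, and the presence of $\sqrt2$ together with $I$ strongly suggests that the rational endomorphism algebra of $A$ contains a quaternion or a CM field. The key structural input is that $\chi_4$ is an irreducible $4$-dimensional representation of $G$, which forces $A$ to be \emph{isotypic}: by the standard Poincar\'e decomposition an abelian variety with an irreducible rational representation of a finite group on its tangent space is isogenous to a power $E^k$ of a single simple abelian variety, and the dimension count $\dim A=4$ together with the structure of $\chi_4$ should pin down $E$ as an elliptic curve, giving $A\sim E^4$.

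The central computation is then to identify $E$ and its CM type. The plan is to compute a period matrix for $A$ directly from the known geometry. Because $\overline S$ is a double cover of $\overline X_{40}$ and the family has an interpretation via the bidouble cover in Proposition \ref{prop:action transposition}, one route is to use the decomposition $\hat S\to\hat B_1,\hat B_2$ into double covers of (blow-ups of) abelian surfaces $B_1,B_2$; each $B_i$ contributes to $A$, and the genus-$4$ branch curves together with the $\Sigma_5$-symmetry constrain the periods. A cleaner route, which I would pursue in parallel, is to read off the action of a well-chosen element directly: the order-$8$ matrix in \eqref{eq:matrices} has eigenvalues that are primitive $8$th roots of unity, and computing the characteristic polynomial of its action on $H_1(\overline S,\mathbb Z)\otimes\mathbb Q$ (a $\mathbb Q$-rational endomorphism up to isogeny) should produce an integral polynomial whose roots, combined with the analytic eigenvalues, determine the endomorphism field. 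The CM by $\mathbb Z[\sqrt{-15}]$ must emerge here: I expect that some rational combination of group elements acts on $A$ as multiplication by $\sqrt{-15}$, consistent with the fact that the $40$ nodes of $\overline X_{40}$ are defined over $\mathbb Q(\sqrt{-15})$, so that the arithmetic of $\sqrt{-15}$ is already visible in the defining data of the surface.

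The main obstacle will be passing rigorously from the \emph{analytic} representation (the explicit complex matrices, which live over $\mathbb Q(I,\sqrt2)$ and a priori only see the holomorphic tangent space) to the \emph{rational} Hodge structure on $H_1(\overline S,\mathbb Q)$, which is what controls the isogeny class and the endomorphism algebra. One must check that the rational representation of $G$ on $H_1(\overline S,\mathbb Q)$ is $\chi_4\oplus\overline{\chi_4}$ (as it must be, being the realification of $\chi_4$) and then identify which $\mathbb Q$-algebra of $G$-equivariant endomorphisms acts, verifying that it is a CM field rather than an indefinite quaternion algebra. The distinction is decided by whether the relevant Schur index and the signature of the Rosati form are compatible with complex multiplication by an imaginary quadratic field; I would settle this by exhibiting an explicit $G$-equivariant endomorphism squaring to $-15$ and checking it is not a root of unity times an isogeny, ruling out the quaternionic case. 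Once $\operatorname{End}^0(A)\supseteq\mathbb Q(\sqrt{-15})$ acting on the $4$-dimensional $A$ is established, simplicity of $E$ follows because $\mathbb Q(\sqrt{-15})$ has class number $2$ and admits no elliptic curve over $\mathbb Q$ but does admit one over the CM field, and the isotypic structure forces $A\sim E^4$ with $E$ the CM elliptic curve, completing the proof.
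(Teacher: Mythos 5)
There is a genuine gap, and it sits exactly where the arithmetic content of the statement lives. Your first step (isotypy of $A$) is broadly in the spirit of the paper but already incomplete as written: irreducibility of $H_1(\overline S,\mathbb Q)$ as a $\mathbb Q[2.\Sigma_5^+]$-module gives $A\sim B^k$ with $B$ simple, but a ``dimension count'' does not pin $B$ down to an elliptic curve --- a priori $A$ could be a simple $4$-dimensional abelian variety of quaternionic type, or $B^2$ with $B$ a simple abelian surface, since the commutant of the group in $\operatorname{End}(H_1(\overline S,\mathbb Q))$ is a quaternion algebra over $\mathbb Q$ (this is what Schur index $2$ means here). The paper closes this case analysis by combining the Schur index with the existence of a $2.\Sigma_5^+$-invariant antisymmetric form on $\chi_4$ and invoking \cite[Theorem 4.1 $(ii_4)$]{PZ}, which outputs directly that $A\sim E^4$ with $E$ an elliptic curve with CM. You correctly flag the quaternion-versus-CM dichotomy, but you defer its resolution to your second step, which is where the real problem arises.

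The fatal issue is your plan for identifying the CM order. Every quantity you propose to compute --- the eigenvalues and characteristic polynomial of the order-$8$ element on $H_1(\overline S,\mathbb Q)$, the structure of the $G$-equivariant endomorphisms of $H_1$, the field generated by the matrices in \eqref{eq:matrices} --- is determined by the character of $\chi_4$ alone, hence is the same for \emph{every} abelian fourfold with this symmetry; it cannot single out $\mathbb Q(\sqrt{-15})$. Which imaginary quadratic subfield of the quaternion commutant actually acts by isogenies depends on which elements commute with the complex structure $J$, i.e.\ on the period point, so no ``rational combination of group elements squaring to $-15$'' can be exhibited without already knowing the periods; the observation that the nodes are defined over $\mathbb Q(\sqrt{-15})$ is suggestive but proves nothing. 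The paper gets the field by explicit geometry instead: by Proposition \ref{prop:action transposition} the quotient $Q=\overline X_{40}/\tau$ is birational to Kummer surfaces $B_i/[-1]$ with $\overline S$ dominating the abelian surfaces $B_i$; a specific line $L\subset Q$ passes through $3$ nodes and meets $A_{16},A_{16}'$ in two further points, and four of these five points are the branch locus of a double cover $E\to L$ with $E$ an elliptic curve on $B_1$. The cross-ratio of the four points gives $\lambda$, hence $j(E)$, and a Magma computation certifies CM by $\mathbb Z[\sqrt{-15}]$; since $A$ is isotypic and $E$ is an isogeny factor of $B_1$, hence of $A$, the proposition follows. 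Some such explicit period or curve computation is unavoidable, and your proposal does not contain one.
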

\begin{proof}
The Albanese variety $A$ of $\overline{S}$ is $A=H^{0}(\overline S,\Omega_{\overline S})^{*}/\Lambda$,
where $\Lambda=H_{1}\left(\overline{S},\mathbb Z\right)\subset H^{0}\left(\overline{S},\Omega_{\overline{S}}\right)^{*}$.
Since $2.\Sigma_{5}^{+}$ acts on $A$, $\Lambda$ is a $2.\Sigma_{5}^{+}$-stable
lattice in $\chi_{4}=H^{0}\left(\overline{S},\Omega_{\overline{S}}\right)^{*}$.
The representation $\chi_{4}$ has Schur index $2$ and one computes
that there exists a non-trivial $2.\Sigma_{5}^{+}$-invariant anti-symmetric
bilinear form on $V_{4}=\chi_{4}$. By \cite[Theorem 4.1 $(ii_4)$]{PZ},
that implies that $A$ is isogenous to $E^{4}$ where $E$ is an elliptic
curve with CM. 

Let $\tau$ be the involution acting on $\overline{X}_{40}$ by exchanging the first two coordinates. The line 
\[
L=\left\{X+Z=Y+\frac{1}{4}(-1+I\sqrt{15})W=0\right\},\,\,I^{2}=-1
\]
 is contained in the quotient surface $Q=\overline{X}_{40}/\tau$, the equation of which is given in \cite{Ri}.
This line contains $3$ nodes $a_{1},a_{2},a_{3}$, and cuts the two
$(-2)$-curves disjoint from the $15$ nodes in points denoted by
$a_{4}$ and $a_{5}$. By Proposition \ref{prop:action transposition}
and its proof, the surface $Q$ is birational to two Kummer surfaces
$B_{i}/[-1]$, $i=1,2$, where each $B_{i}$ is an Abelian surface.
The $4$ points $a_{1},\dots,a_{4}$ are the branch points of a degree
$2$ cover $E\to L$, where $E$ is therefore an elliptic curve on
$B_{1}$ (say). The line $L$ is also the image of an elliptic curve
on $B_{2}$, the branch points being $a_{1},a_{2},a_{3},a_{5}$. Using
cross ratio for the points $a_{1},\dots,a_{4}$, one finds that 
\[
E=\{y^{2}=x(x-1)(x-\lambda)\}
\]
 where 
\[
\lambda=\frac{1}{64}\left(17+21\sqrt{5}+I\left(7\sqrt{15}-17\sqrt{3}\right)\right),\,\,I^{2}=-1.
\]
The $j$-invariant of $E$ is 
\[
-\frac{3^{3}5}{2}\left(5\cdot283 + 7^{2}13\sqrt{5}\right)
\]
 and using Magma again (see \cite[Section E]{Ri})
 , 
 one obtains that $E$ has CM by the order
$\mathbb Z[\sqrt{-15}]$ (taking the cross ratio for $a_{1},a_{2},a_{3},a_{5}$
gives an elliptic curve whose $j$-invariant is conjugated to $j(E)$,
having CM by the same order). We know by Proposition \ref{prop:action transposition}
that $\overline{S}$ admits a map onto $B_{1}$, thus the result.

\end{proof}

\subsection{The surface $\overline{S}$ has maximal Picard number}

Finally we prove the following.
\begin{theorem}
The surface $\overline{S}$ and the minimal resolution $\hat{X}_{40}$
of $\overline{X}_{40}$ have maximal Picard number, equal respectively
to $12$ and $52$.
\end{theorem}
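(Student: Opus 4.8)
**The plan is to compute the Picard numbers explicitly by combining the Hodge-theoretic consequences of the large automorphism group with the CM structure of the Albanese variety established in the previous proposition.**

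The plan is to treat the two surfaces separately but to connect them through the double cover $\overline{S}\to\overline{X}_{40}$ and the relation $\widehat{X}_{40}=\overline{S}/\sigma$. For $\overline{S}$, I would first recall that maximal Picard number for a surface of general type means $\rho=h^{1,1}$, and that by the representation computation we have $H^{1,1}(A)=\chi_1+\chi_2+\chi_3+\chi_5+\chi_{10}$ for the Albanese variety $A$. The key point is that $A$ is isogenous to $E^4$ with $E$ a CM elliptic curve, so $A$ is an abelian variety of CM type; hence $\rho(A)=h^{1,1}(A)$, i.e.\ $A$ has maximal Picard number. First I would deduce $\rho(A)$ by a direct count: since $A\sim E^4$ with $E$ having CM, the N\'eron--Severi group has rank equal to $h^{1,1}(A)=\dim_{\mathbb C} H^{1,1}(A)$, which one reads off as $\dim(\chi_1+\chi_2+\chi_3+\chi_5+\chi_{10})$. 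Then I would transport this to $\overline{S}$ via the Albanese map: the pullback $H^2(A,\mathbb Q)\to H^2(\overline{S},\mathbb Q)$ is injective on the algebraic part, and combined with the $(-2)$-curves $A_1,\dots,A_{40}$ coming from the nodes, this should account for all of the expected $\rho(\overline{S})=12$.

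Next I would handle $\widehat{X}_{40}$ by descent. Since $\widehat{X}_{40}$ is the minimal resolution of $\overline{S}/\sigma$ and $H^2(\widehat{X}_{40},\mathbb Z)$ is torsion free (as noted in the excerpt), I would relate $NS(\widehat{X}_{40})$ to the $\sigma$-invariant part of $NS(\overline{S})$ together with the exceptional $(-2)$-curves over the $40$ nodes. The $40$ $(-2)$-curves $A_1,\dots,A_{40}$ contribute $40$ independent classes, and the $\sigma$-invariant transcendental/algebraic decomposition of $H^2(\overline{S})$ supplies the remaining classes. A clean way is to use the representation data: the $\sigma$-invariant part of $H^2(\overline{S})$ corresponds to $H^2(\widehat{X}_{40})$ up to the exceptional curves, and one checks that the Hodge structure on the transcendental lattice is again of CM type so that $\rho(\widehat{X}_{40})=h^{1,1}(\widehat{X}_{40})$. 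Computing $h^{1,1}(\widehat{X}_{40})$ from $\chi_{\mathrm{top}}$ and the irregularity (which is $0$ since the resolution is regular) gives the target value $52$.

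The hard part will be verifying that the algebraic classes one exhibits actually exhaust $H^{1,1}$ rather than merely giving a lower bound $\rho\ge h^{1,1}$; but here the CM structure saves us. Once the Albanese variety is shown to be of CM type, a Hodge structure $H^2(\overline{S},\mathbb Q)$ built from $H^1(A)^{\otimes 2}$-type pieces is automatically of CM type, so the full second cohomology is spanned by Hodge classes, forcing $\rho=h^{1,1}$ with no transcendental remainder. Thus the main obstacle is really the bookkeeping: confirming that the Hodge structure of $\overline{S}$ (and its $\sigma$-quotient) is governed entirely by the CM abelian variety $A\sim E^4$, after which the equalities $\rho(\overline{S})=h^{1,1}(\overline{S})=12$ and $\rho(\widehat{X}_{40})=h^{1,1}(\widehat{X}_{40})=52$ follow from the Euler-characteristic and irregularity computations already available. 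I would close by invoking \cite{PZ} once more to certify that the CM type of $A$ indeed yields the maximal Picard number, and by checking the numerical values $12$ and $52$ against the known invariants $c_2(\overline{S})=16$, $q=4$ and the resolution data for $\widehat{X}_{40}$.
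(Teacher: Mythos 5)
Your overall strategy coincides with the paper's: show $A\sim E^{4}$ with $E$ of CM type has maximal Picard number, transfer maximality to $\overline S$ through the Albanese map, then descend to $\hat X_{40}$, with the values $12$ and $52$ coming from the standard Hodge-number bookkeeping. The genuine gap is in the transfer step. You assert that the Hodge structure of $H^{2}(\overline S,\mathbb Q)$ is ``built from $H^{1}(A)^{\otimes 2}$-type pieces,'' hence of CM type, but you never prove this, and for a general irregular surface it is simply false: the Albanese map only controls the image of $\wedge^{2}H^{1}(A,\mathbb Q)$ inside $H^{2}(\overline S,\mathbb Q)$, and the transcendental lattice can be strictly larger, in which case the CM property of $A$ gives no information about it. The hypothesis that makes the transfer legitimate is that $a^{*}\colon H^{2,0}(A)\to H^{2,0}(\overline S)$ is \emph{surjective} (this is \cite[Proposition 2(a)]{Be2}, which the paper invokes), and it is precisely here that the Schoen/Lagrangian property enters: the map $\wedge^{2}H^{0}(\overline S,\Omega_{\overline S})\to H^{0}(\overline S,K_{\overline S})$ has a one-dimensional kernel, so its image has dimension $6-1=5=p_{g}(\overline S)$ and the map is onto. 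Your proposal never records this surjectivity; without it the argument yields only a lower bound on $\rho(\overline S)$, which you yourself identify as ``the hard part'' but then dismiss with ``the CM structure saves us,'' which it does not on its own.

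Two secondary points. The $(-2)$-curves $A_{1},\dots,A_{40}$ live on $\hat X_{40}$ (or on the blow-up $\hat S$), not on $\overline S$, where the ramification of $\pi$ consists of $40$ isolated fixed points of $\sigma$; in any case $40>12=h^{1,1}(\overline S)$, so the sentence combining them with the pullback from $A$ to ``account for'' $\rho(\overline S)=12$ does not make sense as written (and ``the full second cohomology is spanned by Hodge classes'' cannot be literally true since $p_{g}=5$). Your descent to $\hat X_{40}$ via $\sigma$-invariants together with the $40$ exceptional classes is, however, sound and is essentially an unwinding of \cite[Proposition 2(b)]{Be2}, which the paper cites directly; one checks that $\sigma$ acts trivially on $H^{2}(\overline S,\mathbb Q)$, giving $52=12+40$ as required.
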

\begin{proof}
The Albanese variety $A$ of $\overline{S}$ is isogeneous to $E^{4}$
where $E$ is an elliptic curve with CM, therefore $A$ has maximal
Picard number. Moreover the map
\[
H^{2,0}(A)=\wedge^{2}H^{0}\left(\overline{S},\Omega_{\overline{S}}\right)\longrightarrow H^{2,0}\left(\overline{S}\right)=
H^{0}\left(\overline{S},K_{\overline{S}}\right)
\]
is surjective, thus by \cite[Proposition 2(a)]{Be2}, the
surface $\overline{S}$ has maximal Picard number. There is a dominant
rational map $\overline{S}\dashrightarrow\hat{X}_{40}$ and $\overline{S}$
has maximal Picard number, thus by \cite[Proposition 2(b)]{Be2},
the surface $\hat{X}_{40}$ has maximal Picard number. It is easy
to check that $h^{1,1}(\overline{S})=12$ and $h^{1,1}(\hat{X}_{40})=52$.
\end{proof}

\appendix
\section{Appendix: Quartics with 15 nodes}\label{Sarti}
%
Let $Q_{15}$ be a $K3$ surface in $\mathbb P^3(\mathbb C)$ with 15 nodes.
In this section we show that:
\begin{itemize}
\item The moduli space of quartic $K3$ surfaces with $15$ nodes can be described as the moduli
space of $K3$ surfaces polarized by some lattice $N$ that we describe below, and it is irreducible; 
\item A generic $K3$ surface with $15$ nodes can be realized as a section of the Igusa quartic threefold,
generalizing a similar result for Kummer quartic surfaces.
\end{itemize}
%
%
The K3 surfaces as $Q_{15}$ above are described in \cite[Theorem 8.6]{GS} and in \cite[Section 5]{Ga}, we recall here the 
following result for convenience:
\begin{theorem}[\cite{GS}]\label{teoconali}
Let $\tilde{Q}_{15}$ be a projective $K3$ surface with 15 disjoint smooth rational curves $M_i$, $i=1,\ldots, 15$. Then:
\begin{itemize}
\item[1)]The N\'eron-Severi group of $\tilde{Q}_{15}$ contains the lattice $M_{(\mathbb Z/2\mathbb Z)^4}$ (which
is the smallest primitive sublattice of the $K3$ lattice containing the 15 rational curves $M_i$);
\item[2)] there exists a $K3$ surface $X$ with a symplectic action by $G=(\mathbb Z/2\mathbb Z)^4$ such that
$\tilde{Q}_{15}$ is the minimal resolution of the quotient $X/G$.
\end{itemize} 
\end{theorem}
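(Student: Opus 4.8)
The plan is to establish the two assertions within Nikulin's framework for finite symplectic group actions on $K3$ surfaces, treating (1) as a purely lattice-theoretic statement about the primitive closure of the classes of the fifteen curves, and (2) as a reconstruction of the covering surface $X$ via the global Torelli theorem and the surjectivity of the period map. First I would fix the $K3$ lattice $\Lambda_{K3}=U^{3}\oplus E_{8}^{2}$ and recall that the relevant invariant, the lattice $M_{(\mathbb Z/2\mathbb Z)^4}$, is defined abstractly as the lattice $M_G$ attached by Nikulin to a symplectic action of $G=(\mathbb Z/2\mathbb Z)^4$, namely the minimal primitive sublattice generated by the exceptional curves of the resolution of a quotient $X/G$.

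For assertion (1), I would observe that the fifteen disjoint smooth rational curves $M_i$ are $(-2)$-classes with $M_i\cdot M_j=0$ for $i\neq j$, so they span an orthogonal sum $\langle -2\rangle^{15}\cong A_1^{15}$ inside $NS(\tilde Q_{15})$, which in general is not primitive. Writing $M$ for its primitive closure in $\Lambda_{K3}$, I would describe the even overlattices of $A_1^{15}$ through Nikulin's theory of discriminant forms \cite{Ni1}: they correspond to isotropic subgroups of the discriminant group $(A_1^{15})^{*}/A_1^{15}\cong(\mathbb Z/2\mathbb Z)^{15}$, equivalently to binary linear codes $C\subseteq\mathbb F_2^{15}$ all of whose words have weight divisible by $4$ (the condition making the corresponding overlattice even, the integrality and self-orthogonality being automatic). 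The heart of the argument is then to combine Nikulin's criterion for the existence of a primitive embedding of $M$ into the unimodular lattice $\Lambda_{K3}$ — which bounds the length and rank of $M$ against those of its orthogonal complement — with the requirement that this complement carry the transcendental Hodge structure of a $K3$ surface, in order to pin down $C$, and hence $M$, uniquely as $M_{(\mathbb Z/2\mathbb Z)^4}$.

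For assertion (2), I would reverse the resolution construction. Having identified $M=M_{(\mathbb Z/2\mathbb Z)^4}\hookrightarrow NS(\tilde Q_{15})$ with the lattice $M_G$ of a symplectic $G$-action, I would produce the covering surface $X$ by prescribing its periods: take the transcendental lattice of $\tilde Q_{15}$ with its weight-two Hodge structure, realize it inside the $G$-invariant part of a marked $K3$ period domain so that the coinvariant lattice equals $\Omega_G=M_G^{\perp}$, and invoke surjectivity of the period map to obtain a marked $K3$ surface $X$ carrying a $G$-action. The global Torelli theorem then guarantees that this action is by automorphisms and, since $\Omega_G$ is orthogonal to the period $\omega_X$, that it is symplectic; a final comparison of transcendental lattices and polarization data identifies the minimal resolution of $X/G$ with $\tilde Q_{15}$, as required.

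The hard part will be the lattice-theoretic rigidity in assertion (1): showing that among all admissible doubly-even codes $C$ of length $15$ the unimodularity of $\Lambda_{K3}$ together with the $K3$ Hodge-theoretic constraint forces precisely the code defining $M_{(\mathbb Z/2\mathbb Z)^4}$, and that the resulting primitive closure is unique up to isometry and coincides with the abstractly defined lattice of \cite{GS}. This rests on a delicate discriminant-form computation; by contrast, once these lattice data are secured, the reconstruction of $X$ in assertion (2) is a comparatively formal application of Torelli and the surjectivity of the period map.
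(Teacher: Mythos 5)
First, a remark on the comparison: the paper does not prove this statement at all --- it is quoted ``for convenience'' from \cite[Theorem 8.6]{GS} --- so your sketch must be measured against the argument of \cite{GS}, which is a Nikulin-type argument along the general lines you propose. Your frame for (1) (overlattices of $\langle-2\rangle^{\oplus 15}$ as binary codes, plus a length bound coming from primitivity in the unimodular $K3$ lattice) is the right one, but there is a genuine gap: evenness of the overlattice only forces every word of $C\subseteq\mathbb F_2^{15}$ to have weight divisible by $4$, and no amount of discriminant-form bookkeeping will exclude weight-$4$ words. Indeed $A_1^{\oplus4}$ sits with index $2$ inside $D_4$, which embeds into the $K3$ lattice, and one can write down doubly-even self-orthogonal $[15,4]$ codes containing weight-$4$ words (e.g.\ generated by the supports $\{1,2,3,4\}$, $\{5,6,7,8\}$, $\{9,10,11,12\}$, $\{1,2,5,6,9,10,13,14\}$) which pass your length test $\ell(M)=15-2\dim C\le\rk(M^{\perp})=7$. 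The indispensable input is geometric, not lattice-theoretic: an even set of $k$ disjoint smooth rational curves on a $K3$ surface has $k\in\{0,8,16\}$, because the double cover branched along the $k$ curves, after contracting the resulting $(-1)$-curves, has trivial canonical bundle and Euler number $48-3k$. Only then does the argument close: every nonzero word of $C$ has weight exactly $8$ (weight $16$ being impossible in length $15$), the length bound gives $\dim C\ge4$, the Griesmer/Plotkin bound for an equidistant weight-$8$ code of length $15$ gives $\dim C\le4$ and identifies $C$ uniquely as the $[15,4,8]$ simplex code, whence the primitive closure is $M_{(\mathbb Z/2\mathbb Z)^4}$.

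For (2), your route through the surjectivity of the period map and Torelli is both a detour and incomplete as stated. You identify the coinvariant lattice $\Omega_G\subset H^2(X,\mathbb Z)$ with $M_G^{\perp}$, but these are different lattices (relating the lattice upstairs to the exceptional lattice on the resolution of the quotient is part of what must be proved), and the closing step ``a final comparison of transcendental lattices and polarization data identifies the minimal resolution of $X/G$ with $\tilde Q_{15}$'' is precisely the non-formal point: a Hodge isometry of transcendental lattices does not by itself match the two surfaces together with their configurations of $(-2)$-curves. The proof in \cite{GS} is constructive and uses what (1) already delivers: the sixteen classes $\tfrac12\sum_{i\in w}M_i$, $w\in C$, define a $(\mathbb Z/2\mathbb Z)^4$-Galois cover of $\tilde Q_{15}$ branched along $\sum M_i$; each $M_i$ has inertia of order $2$, its preimage consists of eight $(-1)$-curves, and contracting all of them yields a surface $X$ with $K_X=0$ and $e(X)=144-120=24$, hence a $K3$ surface on which the Galois group acts symplectically because the $2$-form is pulled back from the smooth locus of the quotient. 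You should either adopt this covering construction or supply the two missing identifications in your Torelli argument.
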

With the same notations as in Theorem \ref{teoconali}, assume that $\tilde{Q}_{15}$ is the minimal resolution of $Q_{15}$. By \cite[Theorem 8.3]{GS} the N\'eron-Severi group $NS(\tilde{Q}_{15})$ contains the sublattice $\langle 4\rangle \oplus \langle -2\rangle^{\oplus 15}$ of rank 16. We denote by $M_1,\ldots, M_{15}$ the fifteen $(-2)$--curves that are the exceptional divisors on $\tilde{Q}_{15}$.  In the next section we show that $NS(\tilde{Q}_{15})$ must contain a special overlattice of $\langle 4\rangle \oplus \langle -2\rangle^{\oplus 15}$, which  is described in details in \cite[Theorem 8.3]{GS} and is generated by:
\begin{itemize}
\item A pseudo-ample class $L$ with $L^2=4$ (and $L\cdot M_i=0$, $i=1,\ldots, 15$);
\item the lattice $M:=M_{(\mathbb Z/2\mathbb Z)^4}$ (that we recall below);
\item a class $(L-v)/2$ where $v$ contains exactly $6$ of the $M_i$'s in its support (these are not arbitrarily
chosen and we recall them below). 
\end{itemize}
\subsection{The lattice $M$ and the class $v$}\label{classes}
The lattice $M$ has discriminant $2^7$ and it is decribed by Nikulin \cite[\S 7]{Ni2}. Let $K$ denote the Kummer lattice,
i.e. the smallest sublattice of the $K3$ lattice that contains sixteen $(-2)$--classes. This is negative definite, has rank 16
and discriminant $2^6$, see \cite{Ni1}. We identify the 16 classes of the Kummer lattice with the elements of $(\mathbb Z/2\mathbb Z)^4$ so 
we denote the curves by $K_{ijkh}$ with $i,j,k,h\in\{0,1\}$. One can identify $M=K_{0000}^{\perp}\cap K$.
By using the description of $K$ (see e.g. \cite{GS}), the following classes are contained in $M$:
\begin{itemize}
\item The $15$ classes $K_{ijkh}$ with $(i,j,k,h)\in(\mathbb Z/2\mathbb Z)^4\backslash\{(0,0,0,0)\}$;
\item let $W$ be an hyperplane in the affine space $(\mathbb Z/2\mathbb Z)^4$ with an equation $\sum_{i=1}^{4}\alpha_i x_i=1$,
with $\alpha_i\in\{0,1\}$. Then the $15$ classes $(1/2)\sum_{p\in W} K_p$ are contained in $M$. Each of these
classes contains exactly 8 distinct $(-2)$--classes of the $K_{ijkh}$.
\end{itemize}
Finally as explained in \cite[Theorem 8.3]{GS} the class $v$ such that $(L-v)/2\in NS(Y)$ can be taken as the sum
$$
K_{0001}+K_{0010}+K_{0011}+K_{1000}+K_{0100}+K_{1100}.
$$
{\bf Notation :} For the rest of the section we will denote the fifteen $(-2)$--classes by $M_i$, $i=1,\ldots, 15$ or by 
$K_{ijkh}$ with $(i,j,k,h)\in(\mathbb Z/2\mathbb Z)^4\backslash\{(0,0,0,0)\},$ depending if it is important or not to specify the indices. 
\subsection{The N\'eron-Severi group}\label{NS1}
Let $N$ denote the abstract lattice generated 
by  $\mathbb Z L\oplus M$ and by a class $(L-v)/2$. The next result is contained in the paper \cite[Proposition 5.1]{Ga} in a more general context, for convenience we give here a specific proof for our situation.  
\begin{proposition}\label{pol}
Let $\tilde{Q}_{15}$ be the minimal resolution of a $K3$ quartic surface with $15$ nodes, then $\tilde{Q}_{15}$ is pseudo-ample $N$-polarized, i.e. there is a primitive embedding of $N$ in $NS(\tilde{Q}_{15})$ and the image of $N$ in $NS(\tilde{Q}_{15})$ contains a pseudo-ample class.    
\end{proposition}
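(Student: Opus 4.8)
The goal is to show that the minimal resolution $\tilde{Q}_{15}$ of a quartic $K3$ surface with $15$ nodes is pseudo-ample $N$-polarized. The plan is to produce a primitive embedding of the abstract lattice $N$ into $NS(\tilde{Q}_{15})$ whose image contains a pseudo-ample class. I would begin from the concrete geometry: the hyperplane class $L$ pulled back to $\tilde{Q}_{15}$ satisfies $L^2=4$ and $L\cdot M_i=0$ for the fifteen exceptional $(-2)$-curves $M_i$, so that $\mathbb{Z}L\oplus M$ already embeds, where $M=M_{(\mathbb{Z}/2\mathbb{Z})^4}$ is generated by the $M_i$ together with the half-sum classes $(1/2)\sum_{p\in W}K_p$ recalled in Section \ref{classes}. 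By Theorem \ref{teoconali}, these fifteen disjoint smooth rational curves force $NS(\tilde{Q}_{15})$ to contain the full lattice $M$, and by \cite[Theorem 8.3]{GS} it contains the half-class $(L-v)/2$ with $v$ the prescribed sum of six of the $M_i$. Thus the abstract generators of $N$ all have geometric realizations inside $NS(\tilde{Q}_{15})$, giving a lattice homomorphism $N\to NS(\tilde{Q}_{15})$.

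The first substantive step is to check that this map is a well-defined isometric embedding: one must verify that the abstract intersection form of $N$ (as generated by $\mathbb{Z}L\oplus M$ and $(L-v)/2$) matches the geometric intersection numbers. The values $L^2=4$, $L\cdot M_i=0$, and $M_i\cdot M_j=-2\delta_{ij}$ are immediate, and the intersections involving $(L-v)/2$ follow from $L\cdot v=0$ and $v^2=-12$ (since $v$ is a sum of six disjoint $(-2)$-curves), so that $((L-v)/2)^2=(L^2+v^2)/4=(4-12)/4=-2$. These are routine verifications that I would not grind through in detail. The genuinely structural point is that $(L-v)/2$ is an integral class in $NS(\tilde{Q}_{15})$, which is exactly the content imported from \cite[Theorem 8.3]{GS}.

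The main obstacle is proving primitivity of the embedding, i.e. that $N$ is saturated in $NS(\tilde{Q}_{15})$ and that no further half-integral class is forced. I would argue this at the level of discriminant groups: the lattice $M$ has discriminant $2^7$ by Nikulin's computation \cite[\S 7]{Ni2}, and adjoining $L$ with $L^2=4$ and the half-class $(L-v)/2$ produces $N$ with a controlled discriminant. Primitivity then amounts to showing that any class of $NS(\tilde{Q}_{15})$ lying in $N\otimes\mathbb{Q}$ already lies in $N$; since $NS(\tilde{Q}_{15})$ is an even lattice of signature $(1,\rho-1)$ embedding primitively in the $K3$ lattice $\Lambda_{K3}$, any overlattice of $N$ inside it would be detected by an isotropic subgroup of the discriminant form $q_N$, and I would rule out nontrivial such subgroups that remain even and integral. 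This is where the arithmetic of the Kummer lattice $K$ and the identification $M=K_{0000}^{\perp}\cap K$ must be used carefully, following \cite[Proposition 5.1]{Ga}.

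Finally, to see that the image of $N$ contains a pseudo-ample class, I would take $L$ itself: it is the pullback of the hyperplane class under the contraction $\tilde{Q}_{15}\to Q_{15}\subset\mathbb{P}^3$, hence nef and big with $L^2=4$, and it is orthogonal to all the contracted $(-2)$-curves $M_i$, so it is pseudo-ample in the sense required for the polarization. This establishes that $\tilde{Q}_{15}$ is pseudo-ample $N$-polarized, completing the proof.
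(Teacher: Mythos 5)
Your outline reproduces the easy half of the argument (the containment $\mathbb{Z}L\oplus M\subset NS(\tilde{Q}_{15})$ from Theorem \ref{teoconali} and \cite[Theorem 8.6, 1)]{GS}, and the fact that $L$, being the pullback of the hyperplane class, is pseudo-ample), but it skips the heart of the proposition. The problem is the sentence ``by \cite[Theorem 8.3]{GS} it contains the half-class $(L-v)/2$.'' That reference is used in the paper only to provide the sublattice $\langle 4\rangle\oplus\langle -2\rangle^{\oplus 15}$ and the \emph{abstract description} of the overlattice $N$; the assertion that the N\'eron--Severi group of an arbitrary quartic with $15$ nodes actually contains such a half-integral class is exactly what Proposition \ref{pol} exists to prove (the text announces ``we show that $NS(\tilde{Q}_{15})$ must contain a special overlattice\dots''), so importing it as a citation is essentially assuming the conclusion. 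The paper's proof of this step is a counting argument that your proposal never touches: set $Q:=(\mathbb{Z}L\oplus M)^{\perp}$ in $NS(\tilde{Q}_{15})$ and $R:=(\mathbb{Z}L\oplus M)\oplus Q$, let $k$ be the index of $R$ in $NS(\tilde{Q}_{15})$; then $\ell(R)=8+\ell(Q)$ and $\ell(NS(\tilde{Q}_{15}))\geq \ell(R)-2k$, while unimodularity of the $K3$ lattice forces $\ell(NS(\tilde{Q}_{15}))=\ell(T_{\tilde{Q}_{15}})\leq \rk(T_{\tilde{Q}_{15}})=6-\rk(Q)$. Combining these gives $k\geq \frac{1}{2}(\ell(Q)+\rk(Q))+1\geq 1$, and since at most $\ell(Q)$ of the $k$ glue classes can involve $Q^{\vee}/Q$, at least one must lie purely in $(\mathbb{Z}L\oplus M)^{\vee}/(\mathbb{Z}L\oplus M)\cong \mathbb{Z}/4\mathbb{Z}\oplus(\mathbb{Z}/2\mathbb{Z})^{7}$. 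Integrality and evenness then pin such a class down to $(L\pm w)/2$ with $w$ supported on $6$ or $10$ of the $M_i$, and by \cite[Proposition 8.2]{GS} either choice generates the same lattice $N$. Without an argument of this kind you never establish that $NS(\tilde{Q}_{15})$ is strictly larger than $\mathbb{Z}L\oplus M$.

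A secondary point: your paragraph on primitivity identifies a legitimate concern but does not actually carry out any argument (``I would rule out nontrivial such subgroups'' is a placeholder, not a proof), and the isotropic-subgroup analysis you gesture at is in effect the same discriminant-group computation the paper performs to identify the glue classes. Your side computations ($v^2=-12$, $((L-v)/2)^2=-2$, $L\cdot v=0$) are correct but peripheral. The proposal as written therefore has a genuine gap at the central existence step.
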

\begin{proof}
We use a similar argument as in the proof of \cite[Theorem 8.6]{GS}. By construction and by \cite[Theorem 8.6, 1)]{GS} we know that $\mathbb Z L\oplus M$ is a sublattice of $NS(\tilde{Q}_{15})$ (and $L$ is pseudo-ample). Let $Q$ be the orthogonal complement of $\mathbb Z L\oplus M$ in $NS(\tilde{Q}_{15})$ and let $R:=(\mathbb Z L\oplus M)\oplus Q$, then $NS(\tilde{Q}_{15})$ is an overlattice of finite index of $R$ and $R^{\vee}/R$ has number of generators $\ell(R)$ equal to $1+7+\ell(Q)$ where $\ell(Q)$ denotes the number of generators of $Q^{\vee}/Q$ (recall that $M$ has discriminant $2^7$ and discriminant group isomorphic to $(\mathbb Z/2\mathbb Z)^7$). If $k$ denotes the index of $R$ in $NS(\tilde{Q}_{15}),$ then we have 
$$
\ell(NS(Y))=8+\ell(Q)-2k.
$$
Let $T_{\tilde{Q}_{15}}$ be the transcendental lattice.
Since the $K3$ lattice is unimodular, we have
$$
\ell\left(NS\left(\tilde{Q}_{15}\right)\right)=
\ell\left(T_{\tilde{Q}_{15}}\right)\leq \rk\left(T_{\tilde{Q}_{15}}\right)=22-\rk\left(NS\left(\tilde{Q}_{15}\right)\right)=6-\rk(Q).
$$
This gives
$$
8+\ell(Q)-2k\leq 6-\rk(Q)
$$
and then
$$
k\geq \frac{1}{2}(\ell(Q)+\rk(Q))+1.
$$
Observe that $k$ is the minimum number of classes we have to add to $R$ to obtain the lattice $NS(\tilde{Q}_{15})$. The classes can be of two types,  either these are classes in $(\mathbb Z L\oplus M)^{\vee}/(\mathbb Z L\oplus M)$ or these are sums $\nu+\nu'$ with $\nu\in (\mathbb Z L\oplus M)^{\vee}/(\mathbb Z L\oplus M)$ and $\nu'\in Q^{\vee}/Q$. The maximum number of classes of the second kind is bounded by $\ell(Q)$ so we must have
at least $(\rk(Q)-\ell(Q))/2+1$ classes of the first type. Since $\rk(Q)-\ell(Q)\geq 0$, we have at least one class of the first kind, i.e. contained in $(\mathbb Z L\oplus M)^{\vee}/(\mathbb Z L\oplus M)$. The discriminant group here is $\mathbb Z/4\mathbb Z\oplus M^{\vee}/M=\mathbb Z/4\mathbb Z\oplus (\mathbb Z/2\mathbb Z)^7$. Observe that a class $\nu$ here
is then of the form $(aL/4+ w/2)$ and we have $2(aL/4+w/2)-w\in NS(Y)$ so that $a=\pm 2$. This shows that the class can be assumed to be $(L+ w)/2$.
Moreover the square of this class must be in $2\mathbb Z,$ that gives $L^2+w^2=0 \mod 8$. If
$h$ is the number of curves contained in the support of $w,$ we get $2-h=0 \mod 4$. By the description of the discriminant group of $M$ \cite[Proposition 8.2]{GS} we get that $h=6$ or $h=10$, so that we may assume that the class is of the form $(L-v)/2$ as in the statement (since by \cite[Proposition 8.2]{GS} if we take a class with $h=10$ we get the same lattice $N$). This concludes the proof. 
\end{proof}
\begin{remark}\label{remkX}
  One can easily show that if a $K3$ surface has N\'eron-Severi group exactly isometric to $N$, then it admits a projective model as a quartic surface with $15$ nodes (i.e. $N$ contains a pseudo-ample class), so the corresponding moduli space $X_\Gamma$  is $4$-dimensional and (see \cite[section $2.3$]{Hu})
  it is an arithmetic quotient by some subgroup $\Gamma$ of the isometries of the $K3$ lattice of the domain
  $$
\mathcal D_N=\{\omega\in\mathbb P(T\otimes \mathbb C)\,|\,\omega^2=0,\ \omega\bar{\omega}=0\}, 
$$
where $T$ is the orthogonal complement of $N$ in the $K3$ lattice $U^3\oplus E_8(-1)^2$. This has rank four  and it is the transcendental lattice of the generic $K3$ surface in the family.
\end{remark}
\subsection{The Moduli Space}
Let $\mathcal M_N$ be the moduli space of $K3$ surfaces that are pseudo-ample $N$-polarized. This moduli space is described e.g. in
\cite[Section 1]{Do1}, where it is shown that it is isomorphic to the space $X_\Gamma$ from Remark \ref{remkX}.
\begin{proposition}\label{moduli}
The moduli space $\mathcal M_N$ is irreducible.
\end{proposition}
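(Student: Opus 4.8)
The plan is to show irreducibility of $\mathcal M_N$ by proving that the arithmetic group $\Gamma$ acts on the period domain $\mathcal D_N$ in such a way that the quotient is connected. Since $\mathcal M_N \cong X_\Gamma$ is an arithmetic quotient of $\mathcal D_N$ by $\Gamma$ (as recalled in Remark \ref{remkX} and in \cite[Section 1]{Do1}), the irreducibility of $\mathcal M_N$ follows once I know that $\mathcal D_N$ itself is connected and that $\Gamma$ contains no element swapping the two connected components in the case where $\mathcal D_N$ is disconnected. The transcendental lattice $T$ is the orthogonal complement of $N$ in the $K3$ lattice $U^3 \oplus E_8(-1)^2$; I would first compute its signature, which must be $(2, 4)$ since $N$ has rank $18$ and signature $(1, 17)$, so $T$ has rank $4$ and signature $(2, 4)$. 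A lattice of signature $(2, n)$ gives rise to a period domain $\mathcal D_N$ with exactly two connected components (interchanged by complex conjugation $\omega \mapsto \bar\omega$).

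The key steps, in order, are as follows. First I would identify $\Gamma$ precisely: following the description in \cite[Section 1]{Do1} and \cite[section 2.3]{Hu}, the relevant group is the subgroup of isometries of the $K3$ lattice that fix $N$ pointwise (or preserve the $N$-marking), and by a standard argument (see \cite{Do1}) it can be taken to be $O(T)$, or more precisely the image in $O(T)$ of the stabilizer of $N$ acting on the discriminant group. Second, I would invoke the fundamental fact that for a lattice $T$ of signature $(2, 4)$, the period domain $\mathcal D_N$ has two components and the full orthogonal group $O(T)$ acts transitively on the set of components, because $O(T)$ contains isometries of negative spinor norm / negative real determinant that interchange the two components. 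Third, combining these, the quotient $\Gamma \backslash \mathcal D_N$ is connected, hence irreducible as a variety.

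The main obstacle I expect is the precise bookkeeping in the second step: I must verify that the arithmetic group $\Gamma$ arising from the moduli problem genuinely contains an isometry interchanging the two components of $\mathcal D_N$, rather than acting within a single component. This requires showing that $O(T)$ surjects onto the group of components, equivalently that there exists an isometry of $T$ reversing the orientation of the positive-definite part. For an indefinite lattice of signature $(2, 4)$ this is a general and well-known fact (it follows, for instance, from the existence of reflections $\omega \mapsto \omega - \frac{2(\omega, \delta)}{(\delta, \delta)}\delta$ in roots $\delta$ of negative square, whose spinor norms can be arranged to flip components), but one must check that such an isometry is actually realized inside $\Gamma$ and extends to an isometry of the full $K3$ lattice fixing $N$. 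I would use Nikulin's theory of discriminant forms \cite{Ni2} to lift an appropriate isometry of $T$ to the whole lattice while acting trivially (or compatibly) on $N$, which guarantees it lies in $\Gamma$. Once this lifting is established, the connectedness of the quotient—and therefore the irreducibility of $\mathcal M_N$—is immediate, completing the proof.
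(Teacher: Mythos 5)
Your overall strategy---realize $\mathcal M_N$ as the arithmetic quotient $X_\Gamma=\Gamma\backslash\mathcal D_N$ and show that $\Gamma$ contains an isometry interchanging the two connected components of the type IV domain---is exactly the strategy of the paper's proof, so the plan is sound in outline. The genuine gap is that the decisive step is precisely the one you defer. You assert that transitivity of $O(T)$ on the set of components is ``a general and well-known fact'' for signature $(2,4)$ and that Nikulin's theory will then let you lift a suitable isometry into $\Gamma$; but whether the arithmetic group attached to the moduli problem actually meets the component-swapping part of $O(T\otimes\mathbb R)$ is not a general fact about even lattices of signature $(2,n)$ --- it depends on the isometry class of $T=N^\perp$, and establishing it is where all the work lies. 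The paper does this by (i) checking that the embedding $N\hookrightarrow U^3\oplus E_8(-1)^2$ is unique (Nikulin), so that $N^\perp$ is well defined up to isometry; (ii) computing the discriminant form of $N$ and using Nikulin's uniqueness theorems to identify $N^{\perp}\cong U(2)\oplus U(2)\oplus\langle-2\rangle\oplus\langle-4\rangle$; and (iii) invoking \cite[Proposition 5.6 and Lemma 5.4]{Do1}, whose hypothesis is exactly the presence of such a hyperbolic summand, to produce the involution in $\Gamma$ exchanging the two components. Until you carry out step (ii), or otherwise exhibit a concrete component-swapping isometry of $T$ that extends to the $K3$ lattice compatibly with $N$, the proof is incomplete; you correctly name the obstacle but do not overcome it.

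There are also numerical slips that you should repair. The lattice $N$ is generated by $\mathbb Z L\oplus M$ (rank $1+15=16$) together with the class $(L-v)/2$, so $\rk N=16$, not $18$; consequently $T=N^\perp$ has rank $6$ and signature $(2,4)$, and the domain is of type $IV_4$, matching the $4$-dimensional moduli space. Your statement that $T$ has ``rank $4$ and signature $(2,4)$'' is internally inconsistent, since a rank-$4$ lattice cannot have signature $(2,4)$. None of this changes the fact that $\mathcal D_N$ has two components, but the correct rank and signature are needed to identify $N^\perp$ and hence to apply the criterion that actually finishes the argument.
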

\begin{proof}
The embedding of $N$ into the $K3$ lattice is unique by \cite[Theorem 1.14.4 and Remark 1.14.5]{Ni3} (see also \cite[Theorem 8.3]{GS}). By the construction of \cite[Section 3]{Do1}, $\mathcal{D}_N$ has two connected components both isomorphic to a bounded Hermitian domain of type $IV_{19-(\rk(N)-1)}=IV_4$. Observe that by \cite[Theorem 1.13.2 and Theorem 1.14.2]{Ni3}, the orthogonal complement of $N$ in the $K3$ lattice is uniquely determined by signature and discriminant form. We compute as in \cite[Theorem 8.3]{GS} that the discriminant group of $N$ is $(\mathbb Z/4\mathbb Z)\oplus(\mathbb Z/2\mathbb Z)^5$. If we denote by $q_2$ the discriminant form of the lattice $U(2)$ (that denotes the lattice $U$ with the bilinear form multiplied by $2$), then the discriminant form is the same as $q_2\oplus q_2$ on $(\mathbb Z/2\mathbb Z)^4$ and take value $1/4$ and $1/2$ on the remaining part $(\mathbb Z/4\mathbb Z)\oplus(\mathbb Z/2\mathbb Z)$. Hence we can identify $N^{\perp}$ (modulo isometries) with the lattice  
$$
U(2)\oplus U(2)\oplus \langle-2\rangle\oplus\langle -4\rangle.
$$ 
By \cite[Proposition 5.6 and Lemma 5.4]{Do1} there is an involution in $\Gamma$ that exchanges the two connected components 
of $\mathcal{D}_N$, so that $X_{\Gamma}\simeq \mathcal M_N$ is irreducible.
\end{proof}
Since the hyperplane sections of the Igusa quartic give a 4-dimensional family of quartic surfaces with 15 nodes,
then Proposition \ref{moduli} implies the following.
\begin{theorem}
A generic quartic $K3$ surface with 15 nodes can be realized as a section of the Igusa quartic.
\end{theorem}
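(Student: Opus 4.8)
The plan is to deduce this theorem directly by combining the two structural facts just established: that the moduli space $\mathcal M_N$ of pseudo-ample $N$-polarized K3 surfaces is irreducible (Proposition \ref{moduli}), and that hyperplane sections of the Igusa quartic $I_4$ produce a $4$-dimensional family of quartic surfaces with $15$ nodes. Since by Proposition \ref{pol} the minimal resolution $\tilde Q_{15}$ of every quartic K3 surface with $15$ nodes is pseudo-ample $N$-polarized, the assignment $Q_{15}\mapsto\tilde Q_{15}$ gives a well-defined map from the family of such surfaces into $\mathcal M_N$. The strategy is to show that the Igusa hyperplane sections already sweep out a Zariski-dense (indeed, a full-dimensional open) subset of $\mathcal M_N$, so that a generic point of $\mathcal M_N$ — equivalently, a generic quartic K3 with $15$ nodes — arises this way.

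First I would verify that the hyperplane sections $Q_{15}=I_4\cap H$ are genuinely $N$-polarized and that their image defines a $4$-dimensional subfamily inside $\mathcal M_N$. The count comes from the dual projective space $(\mathbb P^4)^*$ of hyperplanes, which is $4$-dimensional, matching $\dim\mathcal M_N=4$ from Remark \ref{remkX}. One should check that the induced period map from $(\mathbb P^4)^*$ to $\mathcal M_N$ is non-constant, i.e. that the sections actually vary in moduli rather than all being projectively equivalent; this is where the genericity of $H$ enters, and it can be seen from the fact that the N\'eron-Severi lattice of a generic Igusa section is exactly $N$ (the extra rational curves and tropes forced by special position would only increase the Picard rank). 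Thus the image of $(\mathbb P^4)^*$ is a constructible subset of $\mathcal M_N$ of the same dimension as the target.

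Next I would invoke irreducibility: an irreducible variety of dimension $4$ cannot contain a proper constructible subset of dimension $4$ that is not dense. Concretely, since the image of the period map is a constructible set of dimension equal to $\dim\mathcal M_N$, its closure is an irreducible component of $\mathcal M_N$; but $\mathcal M_N$ is irreducible by Proposition \ref{moduli}, so this closure is all of $\mathcal M_N$, and the image contains a dense open subset. Hence a generic point of $\mathcal M_N$ lies in the image, meaning a generic quartic K3 surface with $15$ nodes is (the resolution of) a hyperplane section of $I_4$.

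The main obstacle I anticipate is not the dimension-counting or the irreducibility step, which are formal, but rather the verification that the Igusa sections are $N$-polarized with the N\'eron-Severi lattice exactly $N$ for generic $H$ — that is, confirming the period map lands in $\mathcal M_N$ and is non-degenerate. One must be careful that the $15$ nodes of a generic Igusa section are genuinely disjoint smooth rational curves after resolution (so Theorem \ref{teoconali} and Proposition \ref{pol} apply), and that no hidden algebraic classes push the generic member out of the $N$-polarized locus. Once the period map is shown to be dominant onto the $4$-dimensional irreducible $\mathcal M_N$, the conclusion that a generic $15$-nodal quartic K3 is an Igusa section follows immediately, generalizing the classical statement (cited via \cite[Theorem 3.3.8]{Hu}) that Kummer quartics arise as tangent-hyperplane sections of $I_4$.
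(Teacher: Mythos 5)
Your proposal is correct and follows essentially the same route as the paper: the authors likewise combine Proposition \ref{pol} (every $15$-nodal quartic is $N$-polarized) with Proposition \ref{moduli} (irreducibility of the $4$-dimensional $\mathcal M_N$) and the fact that the Igusa hyperplane sections form a $4$-dimensional family, concluding by the same dimension-count-plus-irreducibility argument. The only difference is that you make explicit the check that the period map $(\mathbb P^4)^*\dashrightarrow\mathcal M_N$ actually has $4$-dimensional image, a point the paper leaves implicit.
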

\begin{remark}
An interesting loci in the moduli space $\mathcal{M}_N$ corresponds to quartic Kummer surfaces with 16 nodes, that can be described as tangent sections of the Igusa quartic, see \cite[Chapter $3$, Section $3.3.3$]{Hu}.
\end{remark}

\bibliography{References}

\

\

\noindent Carlos Rito
\vspace{0.1cm}
\\{\it Permanent address:}
\\ Universidade de Tr\'as-os-Montes e Alto Douro, UTAD
\\ Quinta de Prados
\\ 5000-801 Vila Real, Portugal
\\ www.utad.pt, {\tt crito@utad.pt}
\\{\it Temporary address:}
\\ Departamento de Matem\' atica
\\ Faculdade de Ci\^encias da Universidade do Porto
\\ Rua do Campo Alegre 687
\\ 4169-007 Porto, Portugal
\\ www.fc.up.pt, {\tt crito@fc.up.pt}\\

\noindent Xavier Roulleau
\vspace{0.1cm}
\\Aix-Marseille Universit\'e, CNRS, Centrale Marseille,
\\I2M UMR 7373, 
\\13453 Marseille, France\\
{\tt Xavier.Roulleau@univ-amu.fr}\\

\noindent Alessandra Sarti
\vspace{0.1cm}
\\Universit\'e de Poitiers\\
Laboratoire de Math\'ematiques et Applications, UMR 7348 du CNRS\\
Boulevard Pierre et Marie Curie, T\'el\'eport 2 - BP 30179\\
86962 Futuroscope Chasseneuil, France\\
{\tt sarti@math.univ-poitiers.fr}\\

\end{document}